 \newtheorem{theorem}{Theorem}[section]
 \newtheorem{lemma}[theorem]{Lemma}
 \newtheorem{remark}[theorem]{Remark}
 \def\ba{\begin{array}}
 \def\ea{\end{array}}
 \def\bea{\begin{eqnarray} \label}
 \def\eea{\end{eqnarray}}
 \def\be{\begin{equation} \label}
 \def\ee{\end{equation}}
 \def\bit{\begin{itemize}}
 \def\eit{\end{itemize}}
 \def\ben{\begin{enumerate}}
 \def\een{\end{enumerate}}
 \def\EE{\mathbb{E}}
 \def\NN{\mathbb{N}}
 \def\RR{\mathbb{R}}
 \def\RRd{\mathbb{R}^{d}}
 \def\d{\delta}
 \def\k{\kappa}
 \def\G{\Gamma}
  \def\bfG{{\bf \Gamma}}
 \def\Sig{\Sigma}
 \def\bD{\mathbf{D}}
  \def\bF{\mathbf{F}}
 \def\bN{\mathbf{N}}
 \def\bV{\mathbf{V}}
  \def\bW{\mathbf{W}}
 \def\bX{\mathbf{X}}
  \def\bY{\mathbf{Y}}
 \def\cC{\mathcal{C}}
 \def\cG{\mathcal{G}}
 \def\cL{\mathcal{L}}
 \def\cP{\mathcal{P}}
 \def\dint{\textup{d}}
 \def\vol{\textup{vol}}
 \def\aut{\textup{aut}}
\def\cov{\textup{cov}}
 \DeclareMathOperator{\Var}{Var}
 \DeclareMathOperator{\R}{\mathbb{R}}
 \DeclareMathOperator{\1}{{\bf 1}}
 \DeclareMathOperator{\dom}{dom}
 \newcommand{\ellnorm}[3]{\Vert{#3}\Vert_{\ell^{#1}(\NN)^{\otimes{#2}}}}
 \newcommand{\absolute}[1]{\vert{#1}\vert}
 \newcommand{\Bigabsolute}[1]{\Big\vert{#1}\Big\vert}
 \newcommand{\norm}[1]{\|{#1}\|}
 \newcommand{\Bignorm}[1]{\Big\|{#1}\Big\|}
\begin{document}

\title{\bfseries Multivariate central limit theorems for Rademacher functionals with applications}

\author{Kai Krokowski\footnotemark[1]\; and Christoph Th\"ale\footnotemark[2]}

\date{} \renewcommand{\thefootnote}{\fnsymbol{footnote}}

\footnotetext[1]{Ruhr University Bochum, Faculty of Mathematics, NA 3/28, D-44780 Bochum, Germany. E-mail: kai.krokowski@rub.de}

\footnotetext[2]{Ruhr University Bochum, Faculty of Mathematics, NA 3/68, D-44780 Bochum, Germany. E-mail: christoph.thaele@rub.de}

\maketitle

\begin{abstract}
Quantitative multivariate central limit theorems for general functionals of possibly non-symmetric and non-homogeneous infinite Rademacher sequences are proved by combining discrete Malliavin calculus with the smart path method for normal approximation. In particular, a discrete multivariate second-order Poincar\'e inequality is developed. As a first application, the normal approximation of vectors of subgraph counting statistics in the Erd\H{o}s-R\'enyi random graph is considered. In this context, we further specialize to the normal approximation of vectors of vertex degrees. In a second application we prove a quantitative multivariate central limit theorem for vectors of intrinsic volumes induced by random cubical complexes.
\bigskip
\\
{\bf Keywords}. {Discrete Malliavin calculus, intrinsic volume, multivariate central limit theorem, smart path method, subgraph count, random graph,  random cubical complex, vertex degree}\\
{\bf MSC}. Primary 60F05; Secondary 05C80, 60C05, 60D05, 60H07.
\end{abstract}

\section{Introduction}

Suppose that $X=(X_k)_{k\in\NN}$ is a Rademacher sequence, that is, a sequence of independent random variables satisfying, for all $k\in\NN$, $P(X_k=1)=p_k$ and $P(X_k=-1)=q_k=1-p_k$ for some $p_k\in(0,1)$. Further, fix a dimension parameter $d\in\NN$ and let $F_1=F_1(X),\ldots,F_d=F_d(X)$ be $d$ random variables depending on possibly infinite many members of the Rademacher sequence $X$. We shall refer to such random variables as Rademacher functionals in what follows. The goal of this paper is to derive handy conditions under which the random vector $\bF=(F_1,\ldots,F_d)$ consisting of $d$ Rademacher functionals is close in distribution to a $d$-dimensional Gaussian random vector. In our paper the distributional closeness will be measured by means of a multivariate probability metric based on four times partially differentiable test functions. We will provide two versions of such a result. One is in the spirit of the Malliavin-Stein method and expresses the distributional closeness in terms of so-called discrete Malliavin operators. The second one is a multivariate discrete second-order Poincar\'e inequality, a bound which only involves the first- and second-order discrete Malliavin derivatives of the Rademacher functionals $F_1,\ldots,F_d$, or, more precisely, their moments up to order four. More formally, if $F=F(X)$ is a Rademacher functional, the discrete Malliavin derivative $D_kF$ in direction $k\in\NN$ is defined as $D_kF=\sqrt{p_kq_k}(F_k^+-F_k^-)$, where $F_k^\pm$ is the Rademacher functional for which the $k$th coordinate $X_k$ of the Rademacher sequence $X$ is conditioned to be $\pm 1$. The second-order discrete derivative is iteratively given by $D_kD_\ell F$ for $k,\ell\in\NN$. Such a bound is particularly attractive for concrete applications as demonstrated in the present text.

\smallskip

Let us describe the purpose and the content of our paper in some more detail.
\begin{itemize}
\item[(i)] First of all, our aim is to provide a multivariate quantitative central limit theorem for vectors of Rademacher functionals by bringing together the discrete Malliavin calculus of variations with the so-called smart-path method for normal approximation. This leads to a limit theorem in the spirit of the Mallavin-Stein method and generalizes an earlier result from \cite{KRT1}, where the underlying Rademacher sequence has been assumed to be homogeneous and symmetric, meaning that $p_k=q_k=1/2$ for all $k\in\NN$ in above notation. 

\item[(ii)] From this result, a further aim of this text is to develop a discrete multivariate second-order Poincar\'e inequality, that is, a bound for the multivariate normal approximation that only involves the first- and second-order discrete Malliavin derivatives, or, more precisely, its moments up to order four. Such a result can be regarded as the multivariate analogue of the main theorem obtained in \cite{KRT2}.

\item[(iii)] Finally, we want to demonstrate the flexibility and applicability of our discrete multivariate second-order Poincar\'e inequality by means of examples from the theory of random graphs and random topology. First, we are going to provide a bound of order $O(n^{-1})$ for the multivariate normal approximation of a vector of subgraph counts in the classical Erd\H{o}s-R\'enyi random graph. This generalizes (in a different probability metric) a result of Reinert and R\"ollin \cite{ReinertRollin}, where vectors of the number of edges, $2$-stars and triangles have been considered, and adds a rate of convergence to the related central limit theorem in the paper of Janson and Nowicki \cite{JansonNowicki}. Moreover, for the same model we also provide a multivariate central limit theorem for the random vector of vertex degrees with a rate of convergence of order $O(n^{-1/2})$. This can be seen as a version of the result of Goldstein and Rinott \cite{GoldsteinRinott} and is the multivariate analogue of a related Berry-Esseen bound proved by Goldstein \cite{Goldstein} and Krokowski, Reichenbachs and Th\"ale \cite{KRT2}. Second, we consider the vector of intrinsic volumes determined by different models of random cubical complexes in $\RRd$ and derive bounds of order $O(n^{-d/2})$ on the error in their normal approximation. This constitutes a multivariate extension of the central limit theorem provided by Werman and Wright \cite{WermanWright} and is in line with recent developments in the active field of random topology, see \cite{AdlerEtAl,BobrowskiKahle,KahleSurvey,LinialMeshulam} as well as the references cited therein.
\end{itemize}

\smallskip

Our results continue a recent line of research concerning limit theorems for Rademacher functionals. The field has been opened by Nourdin, Peccati and Reinert \cite{NourdinPeccatiReinertRademacher}, who proved first limit theorems for a class of smooth probability metrics. Later, Krokowski, Reichenbachs and Th\"ale \cite{KRT1,KRT2} considered Berry-Esseen bounds and provided a first univariate discrete second-order Poincar\'e inequality. Zheng \cite{Zheng} has obtained a refined bound for the Wasserstein distance and also proved almost sure central limit theorems. Moreover, Privault and Torrisi \cite{PrivaultTorrisi} as well as Krokowski \cite{Krokowski}  also derived bounds for the Poisson approximation of Rademacher functionals.

\smallskip

This text is organized as follows. In Section \ref{sec:MalliavinBackground} we briefly recall the basis of discrete Malliavin calculus in order to keep the paper reasonably self-contained. A first quantitative multivariate central limit theorem for functionals of a possibly non-symmetric and non-homogeneous infinte Rademacher sequence based on the discrete Malliavin-Stein method is presented in Section \ref{sec:1stBound}, while Section \ref{sec:2ndOrderPoincare} contains the discrete multivariate second-order Poincar\'e inequality. The applications to subgraph and vertex degree counts in the Erd\H{o}s-R\'enyi random graph and to the intrinsic volumes of random cubical complexes are discussed in the final Section \ref{sec:Application}.

\section{Discrete Malliavin calculus}\label{sec:MalliavinBackground}

In this section we briefly recall the basis of discrete Malliavin calculus. We refer to the monograph \cite{PrivaultLMN} as well as to the papers \cite{KRT1,KRT2,NourdinPeccatiReinertRademacher} for details, proofs and further references.

\paragraph{Rademacher sequences.}
Let $p:=(p_k)_{k \in \NN}$ be a sequence of success probabilities $0 < p_k <1$ and put $q:=(q_k)_{k \in \NN}$ with $q_k := 1-p_k$. Furthermore, let $(\Omega, \mathcal{F}, P)$ be the following probability space: $\Omega:=\{-1,+1\}^{\NN}$, $\mathcal{F}:={\rm power}(\{-1,+1\})^{\otimes \NN}$, where ${\rm power}(\,\cdot\,)$ denotes the power set of the argument set, and $P:=\bigotimes_{k=1}^\infty (p_k \delta_{+1} + q_k \delta_{-1})$ with $\delta_{\pm 1}$ being the unit-mass Dirac measure at $\pm 1$. We let $X:=(X_k)_{k\in\NN}$ be a sequence of independent random variables defined on $(\Omega, \mathcal{F}, P)$ by $X_k(\omega):=\omega_k$, for every $k \in \NN$ and $\omega := (\omega_k)_{k\in\NN} \in \Omega$. We refer to such a sequence $X$ as (possibly non-symmetric and non-homogeneous infinite) Rademacher sequence. We also define the standardized sequence $Y := (Y_k)_{k\in\NN}$ by putting $Y_k:=(\Var(X_k))^{-1/2}(X_k-\EE[X_k])=(2\sqrt{p_kq_k})^{-1}(X_k-p_k+q_k)$ for every $k \in \NN$. Note that $Y_k=X_k$, iff $X$ is a homogeneous and symmetric Rademacher sequence, that is, if $p_k=q_k=1/2$, for all $k \in \NN$.

\paragraph{Discrete multiple stochastic integrals and chaos decomposition.}
Let us denote by $\kappa$ the counting measure on $\NN$. We put $\ell^2(\NN)^{\otimes n} := L^2(\NN^n,\mathcal{P}(\NN)^{\otimes n}, \kappa^{\otimes n})$ for every $n \in \NN$ and refer to the elements of that space as kernels. Let $\ell^2(\NN)^{\circ n}$ denote the subset of $\ell^2(\NN)^{\otimes n}$ consisting of symmetric kernels and let $\ell_0^2(\NN)^{\otimes n}$ be the subset of kernels vanishing on diagonals, that is, vanishing on the complement of the set $\Delta_n:=\{ (i_1, \dotsc, i_n) \in \NN^n : \text{$i_j \neq i_k$ for $j \neq k$} \}$. We then put $\ell_0^2(\NN)^{\circ n} := \ell^2(\NN)^{\circ n} \cap \ell_0^2(\NN)^{\otimes n}$.

For $n \in \NN$ and $f \in \ell_0^2(\NN)^{\circ n}$, we define the discrete multiple stochastic integral of order $n$ of $f$ by
\begin{align*}
J_n(f) &:= \sum_{(i_1, \dotsc, i_n)\in\NN^n} f(i_1, \dotsc, i_n)Y_{i_1} \cdot \dotsc \cdot Y_{i_n} = \sum_{(i_1, \dotsc, i_n)\in\Delta_n} f(i_1, \dotsc, i_n)Y_{i_1} \cdot \dotsc \cdot Y_{i_n} \notag\\
&\phantom{:}= n!\sum_{1 \leq i_1 < \dotsc < i_n < \infty} f(i_1, \dotsc, i_n)Y_{i_1} \cdot \dotsc \cdot Y_{i_n}\,.
\end{align*}
In addition, we put $\ell^2(\NN)^{\otimes 0} := \R$ and $J_0(c):=c$, for every $c \in \RR$.

It is an important fact that every $F \in L^2(\Omega)$ admits a decomposition of the form
\begin{align}\label{Chaos representation}
F = \EE[F] + \sum_{n=1}^\infty J_n(f_n)
\end{align}
with uniquely determined kernels $f_n \in \ell_0^2(\NN)^{\circ n}$.

\paragraph{Discrete Malliavin derivative.} For every $\omega = (\omega_1, \omega_2, \dotsc) \in \Omega$ and $k \in \NN$ we define the two sequences $\omega_\pm^k$ by putting $\omega_\pm^k:=(\omega_1, \dotsc, \omega_{k-1}, \pm1, \omega_{k+1}, \dotsc)$.
Furthermore, for every $F \in L^1(\Omega)$, $\omega \in \Omega$ and $k \in \NN$ let $F_k^\pm(\omega) := F(\omega_\pm^k)$. For such an $F$ the discrete Malliavin derivative is defined by $DF:=(D_kF)_{k \in \NN}$ with
\begin{align}\label{Pathwise gradient}
D_kF:=\sqrt{p_kq_k}\,(F_k^+-F_k^-)\,,\qquad k\in\NN\,.
\end{align}
Note that it immediately follows from \eqref{Pathwise gradient} that, for every $k \in \NN$, $D_kF$ is independent of $X_k$. In the following we state a product formula for the discrete Mailliavin derivative. If $F,G \in L^1(\Omega)$, then
\begin{align}\label{eq:ProductFormula}
D_k(FG) = (D_kF)G+F(D_kG)-\frac{X_k}{\sqrt{p_kq_k}}(D_kF)(D_kG)\,,\qquad k\in\NN\,.
\end{align}
For $m\in\NN$ let us further define the iterated discrete Malliavin derivative of order $m$ of $F$ by $D^mF := (D_{k_1, \dotsc, k_m}^mF)_{k_1, \dotsc, k_m \in \NN}$ with $D_{k_1, \dotsc, k_m}^mF := D_{k_m}(D_{k_1, \dotsc, k_{m-1}}^{m-1}F)$, for every $k_1, \dotsc, k_m \in \NN$, where $D^0F:=F$. Given $F \in L^2(\Omega)$ with chaos representation $F=\EE[F]+\sum_{n=1}^\infty J_n(f_n)$ as in \eqref{Chaos representation} and $m \in \NN$, we will say that $F \in \dom(D^m)$, provided that
\begin{align*}
\EE [ \ellnorm{2}{m}{D^mF}^2 ] = \sum_{n=m}^\infty \frac{n!}{(n-m)!} n!  \ellnorm{2}{n}{f_n}^2 < \infty\,.
\end{align*}
If $F\in\dom(D)$ with chaos decomposition \eqref{Chaos representation}, $D_kF$ can be $P$-almost surely be identified with the random variable given by
$$
D_kF=\sum_{n=1}^\infty nJ_{n-1}(f_n(\,\cdot\,,k))\,,
$$
where $f_n(\,\cdot\,,k)$ stands for the kernel $f_n$ with one of its variables fixed to be $k$ (which one is irrelevant, since the kernels are symmetric).

\paragraph{Discrete divergence.} We will now define the discrete divergence operator $\delta$ and its domain $\dom(\delta)$. Let $f_{n} \in \ell_0^2(\NN)^{\circ n-1} \otimes \ell^2(\NN)$, for every $n \in \NN$, and consider the sequence $u:=(u_k)_{k \in \NN}$ given by $u_k:= \sum_{n=1}^\infty J_{n-1}(f_n(\, \cdot \, ,k))$ for every $k \in \NN$. For such $u$, we say that $u \in \dom(\delta)$, if
\begin{align*}
\sum_{n=1}^\infty n! \ellnorm{2}{n}{\widetilde{f}_n \1_{\Delta_n}}^2 < \infty\,,
\end{align*}
where $\widetilde{f}_n$ denotes the canonical symmetrization of $f_n$. For $u \in \dom(\delta)$, the discrete divergence operator $\delta$ is then defined by
\begin{align*}
\delta(u):= \sum_{n=1}^\infty J_{n}(\widetilde{f}_n \1_{\Delta_n})\,.
\end{align*}
One can interpret $\delta$ as the operator that is adjoint to the discrete Malliavin derivative. Namely, if $F \in \dom(D)$ and $u \in \dom(\delta)$, then
\begin{equation}\label{eq:adjoint}
\EE[F \delta(u)] = \EE[\langle DF, u \rangle_{\ell^2(\NN)}]\,.
\end{equation}

\paragraph{Discrete Ornstein-Uhlenbeck operator and its inverse.}
Next, we define the discrete Ornstein-Uhlenbeck operator $L$ and its (pseudo-)inverse $L^{-1}$. Given $F \in L^2(\Omega)$, again with chaos representation $F=\EE[F]+\sum_{n=1}^\infty J_n(f_n)$ as above, we say that $F \in \dom(L)$, if
\begin{align*}
\sum_{n=1}^\infty n^2 n!  \ellnorm{2}{n}{f_n}^2 < \infty\,.
\end{align*}
For $F \in \dom(L)$, the discrete Ornstein-Uhlenbeck operator $L$ is then defined by
\begin{align*}
LF := -\sum_{n=1}^\infty n J_n(f_n)\,.
\end{align*}
For centred $F \in L^2(\Omega)$, its (pseudo-) inverse is given as follows:
\begin{align*}
L^{-1}F := -\sum_{n=1}^\infty \frac{1}{n} J_n(f_n)\,.
\end{align*}

\paragraph{Discrete Ornstein-Uhlenbeck semigroup.}
Finally, we introduce the semigroup associated with the discrete Ornstein-Uhlenbeck operator $L$. The discrete Ornstein-Uhlenbeck semigroup $(P_t)_{t \geq 0}$ is defined by
\begin{align*}
P_tF := \EE[F] + \sum_{n=1}^\infty e^{-nt} J_n(f_n)\,,\qquad t\geq0\,.
\end{align*}
The process associated with the discrete Ornstein-Uhlenbeck semigroup is given as follows. For every $k \in \NN$, let $X_k^*$ be an independent copy of $X_k$. Furthermore, let $(Z_k)_{k \in \NN}$ be a sequence of independent and exponentially distributed random variables with mean $1$, where $Z_k$ is independent of $X_k$ and $X_k^*$, for every $k \in \NN$. For every real $t \geq 0$, let $X^t:=(X_k^t)_{k\in\NN}$ with
\begin{align*}
X_k^t := X_k^* \1_{\{ Z_k \leq t \}} + X_k \1_{\{ Z_k > t \}}\,,\qquad k\in\NN\,.
\end{align*} 
Then, $(X^t)_{t \geq 0}$ is the discrete Ornstein-Uhlenbeck process associated with the Ornstein-Uhlenbeck semigroup $(P_t)_{t \geq 0}$. The relation of Ornstein-Uhlenbeck semigroup and process is exhibited in the following formula, known as Mehler's formula. If $F\in L^2(\Omega)$, then it $P$-almost surely holds that
\begin{align}\label{eq:Mehler}
P_tF = \EE[F(X^t) \, \vert \, X]\,,\qquad t\geq0\,.
\end{align}

\paragraph{Integration by parts, integrated Mehler's formula and Poincar\'e inequality.}
We notice that the discrete Malliavin operators $D$, $\delta$ and $L$ are related by the identity $L = -\delta D$. Moreover, the following discrete integration by parts formula is valid. If $F,G \in \dom(D)$, then
\begin{align}\label{Integration by parts formula}
\EE[(F-\EE[F])G] = \EE[\langle -DL^{-1}(F - \EE[F]), DG \rangle_{\ell^2(\NN)}]\,.
\end{align}
Indeed, the relation $L = -\delta D$ and the adjointness of $D$ and $\delta$ in \eqref{eq:adjoint} yield 
\begin{align*}
&\EE[(F-\EE[F])G] = \EE[LL^{-1}(F - \EE[F])G]= \EE[-\delta D L^{-1}(F - \EE[F])G] = \EE[\langle -DL^{-1}(F - \EE[F]), DG \rangle_{\ell^2(\NN)}]\,.
\end{align*}
The following identity can be seen as an integrated version of Mehler's formula. If $m, k_1, \dotsc, k_m \in \NN$ and $F \in \dom(D^m)$ with $\EE[F]=0$, then it $P$-almost surely holds that
\begin{align}\label{eq:IntegratedMehler}
-D_{k_1, \dotsc, k_m}^m L^{-1}F = \int_0^\infty e^{-mt} P_tD_{k_1, \dotsc, k_m}^mF \, dt\,.
\end{align}
From this, one can immediately deduce the following important inequality. If $m, k_1, \dotsc, k_m \in \NN$, $\alpha \geq 1$ and $F \in \dom(D^m)$ with $\EE[F]=0$, then
\begin{align}\label{eq:MehlerInequality}
\EE[\absolute{D_{k_1, \dotsc, k_m}^m L^{-1}F}^\alpha] \leq \EE[\absolute{D_{k_1, \dotsc, k_m}^m F}^\alpha]\,.
\end{align}
Finally, let us recall a discrete version of the classical Poincar\'e inequality. For every $F \in L^1(\Omega)$, it holds that
\begin{align}\label{eq:PoincareInequality}
\Var(F) \leq \EE[\norm{DF}_{\ell^2(\NN)}^2]\,.
\end{align}

\section{Multivariate central limit theorems}

\subsection{A discrete Malliavin-Stein bound}\label{sec:1stBound}

In the following, we will prove a bound on the error in the multivariate normal approximation of vectors of general functionals of possibly non-symmetric and non-homogeneous infinite Rademacher sequences. This way we generalize Theorem 5.1 in \cite{KRT1}, where only functionals of symmetric Rademacher sequences have been considered. The proof proceeds along the lines of \cite{KRT1}, but there are a number of subtleties arising in the more general case here that were not present before. In particular, in the non-symmetric case a new summand in the error bound becomes visible as further discussed in Remark \ref{rem:NewTerm} below. To make this and other phenomena transparent, we include the full details.

The distance between the law of a vector of Rademacher functionals and a multivariate normal distribution will be measured by the so-called $d_4$-distance that is defined as follows. Fix $d \in \NN$ and let $n=1, \dotsc, d$. For an $n$ times partially differentiable function $g: \RR^d \rightarrow \RR$ we put
\begin{align*}
M_k(g) := \max_{1 \leq i_1, \dotsc, i_k \leq d} \Bignorm{\frac{\partial^k}{\partial x_{i_1} \dotsc \partial x_{i_k}}g}_\infty
\end{align*}
for every $k=1, \dotsc, n$, where $\|\,\cdot\,\|_\infty$ denotes the supremum norm of the argument function. The $d_4$-distance between the distributions of two $\RR^d$-valued random vectors $\bX$ and $\bY$ is defined by
\begin{align*}
d_4(\bX, \bY) := \sup_{g} \absolute{\EE[g(\bX)] - \EE[g(\bY)]}\,,
\end{align*}
where the supremum is running over all four times partially differentiable functions $g: \RR^d \rightarrow \RR$ with bounded partial derivatives fulfilling $M_1(g), M_2(g), M_3(g), M_4(g) \leq 1$.

\begin{theorem}\label{thm:Multivariate main theorem}
Fix $d \in \NN$ and let $F_1, \dotsc, F_d$ be Rademacher functionals with $F_i \in \dom(D)$, $\EE[F_i] = 0$ and $\EE[\norm{(pq)^{-1/4} DF_i}_{\ell^4(\NN)}^4] < \infty$, for every $i=1, \dotsc, d$. Define $\bF := (F_1, \dotsc, F_d)$ and let $\bN:=(N_1, \dotsc, N_d)$ be a centred Gaussian random vector with symmetric and positive semidefinite covariance matrix $\Sigma:=(\Sigma_{ij})_{i,j=1}^d$. Further, let
\begin{align*}
A_1 &:= \frac{1}{2} \sum_{i,j=1}^d \EE[\absolute{\Sigma_{ij} - \langle DF_j, -DL^{-1}F_i \rangle_{\ell^2(\NN)}}]\,,\\
A_2 &:= \frac{1}{4} \EE \Big[ \Big\langle \frac{\absolute{p-q}}{\sqrt{pq}} \Big( \sum_{j=1}^d \absolute{DF_j} \Big)^2, \sum_{i=1}^d \absolute{-DL^{-1}F_i} \Big\rangle_{\ell^2(\NN)} \Big]\,,\\
A_3 &:= \frac{5}{24} \EE \Big[ \Big\langle \frac{1}{pq} \Big( \sum_{j=1}^d \absolute{DF_j} \Big)^3, \sum_{i=1}^d \absolute{-DL^{-1}F_i} \Big\rangle_{\ell^2(\NN)} \Big]\,.
\end{align*}
Then,
\begin{align*}
d_4(\bF,\bN) &\leq A_1+A_2+A_3\,.
\end{align*}
\end{theorem}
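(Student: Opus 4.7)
I would prove this by combining the Gaussian smart-path method with the discrete Malliavin integration-by-parts formula \eqref{Integration by parts formula} and a two-step Taylor expansion of the test function. Since $d_4$ only depends on the marginal laws of $\bF$ and $\bN$, I can take them independent. The new feature compared to the symmetric case treated in \cite{KRT1} is that $\EE[X_k]=p_k-q_k$ no longer vanishes, so an intermediate $X_k$-linear term in the expansion survives the outer expectation, which is what produces the additional bound $A_2$.

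\smallskip

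Fix $g:\RRd\to\RR$ with $M_1(g),\ldots,M_4(g)\leq 1$ and set $\bF(t):=\sqrt{t}\,\bF+\sqrt{1-t}\,\bN$ for $t\in[0,1]$. The fundamental theorem of calculus combined with the Gaussian integration-by-parts identity $\EE[N_i h(\bN)]=\sum_j\Sigma_{ij}\EE[\partial_j h(\bN)]$ (used to cancel the $1/\sqrt{1-t}$ singularity) yields
\begin{align*}
\EE[g(\bF)]-\EE[g(\bN)]=\int_0^1\!\Bigg(\frac{1}{2\sqrt{t}}\sum_{i=1}^d\EE[F_i\,\partial_i g(\bF(t))]-\frac{1}{2}\sum_{i,j=1}^d\Sigma_{ij}\,\EE[\partial_{ij}^2 g(\bF(t))]\Bigg)dt.
\end{align*}
Then \eqref{Integration by parts formula} rewrites the first term as $\sum_{k\in\NN}\EE[(-D_kL^{-1}F_i)\,D_k\partial_i g(\bF(t))]$, so everything reduces to analysing $D_k\partial_i g(\bF(t))=\sqrt{p_kq_k}\bigl(\partial_i g(\bF_k^+(t))-\partial_i g(\bF_k^-(t))\bigr)$. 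Writing $\bar{\bF}_k(t):=\tfrac12(\bF_k^+(t)+\bF_k^-(t))$ and $\Delta_kF_j:=F_j^+(k)-F_j^-(k)=D_kF_j/\sqrt{p_kq_k}$, the symmetry $\bF_k^{\pm}(t)=\bar{\bF}_k(t)\pm\tfrac{\sqrt{t}}{2}\Delta_k\bF$ kills the even-order terms in a third-order Taylor expansion of $\partial_i g$ around $\bar{\bF}_k(t)$; combining this with a further first-order expansion of $\partial_{ij}^2 g$ around $\bF(t)=\bar{\bF}_k(t)+\tfrac{\sqrt{t}X_k}{2}\Delta_k\bF$ and multiplying by $\sqrt{p_kq_k}$ produces
\begin{align*}
D_k\partial_i g(\bF(t))=\sqrt{t}\sum_{j=1}^d\partial_{ij}^2 g(\bF(t))D_kF_j-\frac{tX_k}{2\sqrt{p_kq_k}}\sum_{j,\ell=1}^d\partial_{ij\ell}^3 g(\bF(t))D_kF_jD_kF_\ell+T_{i,k}(t),
\end{align*}
with a Lagrange remainder satisfying $|T_{i,k}(t)|\leq c\,t^{3/2}(p_kq_k)^{-1}M_4(g)\bigl(\sum_j|D_kF_j|\bigr)^3$.

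\smallskip

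Inserting this into the smart-path identity and integrating in $t$ produces three kinds of contribution. The linear-in-$D_kF_j$ term telescopes with the Gaussian correction into $\frac12\sum_{i,j}\EE[\partial_{ij}^2 g(\bF(t))(\langle DF_j,-DL^{-1}F_i\rangle_{\ell^2(\NN)}-\Sigma_{ij})]$, which bounded by $M_2(g)\leq 1$ yields $A_1$. For the $X_k$-linear term I condition on $\{X_m:m\neq k\}$ and $\bN$, under which $D_kF_j,D_kL^{-1}F_i$ and $\bF_k^{\pm}(t)$ are deterministic, and use the exact decomposition
\begin{align*}
\EE[X_k\,\varphi(\bF(t))\mid\,\cdot\,]=\tfrac{p_k-q_k}{2}\bigl(\varphi(\bF_k^+(t))+\varphi(\bF_k^-(t))\bigr)+\tfrac{1}{2}\bigl(\varphi(\bF_k^+(t))-\varphi(\bF_k^-(t))\bigr)
\end{align*}
applied to $\varphi=\partial_{ij\ell}^3 g$: the first summand (bounded by $|p_k-q_k|M_3(g)$) combines with the $1/\sqrt{p_kq_k}$ factor to give the shape $|p_k-q_k|/\sqrt{p_kq_k}$ of $A_2$, while the second summand (bounded by $M_4(g)\sqrt{t/(p_kq_k)}\sum_m|D_kF_m|$) upgrades the quadratic in $|D_kF|$ to a cubic one and, together with $T_{i,k}(t)$, contributes to $A_3$. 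The main obstacle is the arithmetic bookkeeping: reproducing the exact constants $1/2$, $1/4$ and $5/24$ in the theorem requires carefully tracking the Lagrange remainders from both Taylor expansions together with the time integrals $\int_0^1\sqrt{t}\,dt=2/3$ and $\int_0^1 t\,dt=1/2$.
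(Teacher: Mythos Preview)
Your proposal is correct and follows essentially the same smart-path strategy as the paper: the only differences are that the paper centres the second-order Taylor term at $\bF_k^\pm$ rather than at $\bF(t)$ (so that direct independence from $X_k$ replaces your conditional decomposition of $\EE[X_k\varphi(\bF(t))\mid\cdot]$), and bounds $|\Psi(1)-\Psi(0)|$ by $\sup_t|\Psi'(t)|$ instead of integrating. With your midpoint expansion the two Lagrange remainders contribute $\tfrac{1}{24}+\tfrac{1}{8}=\tfrac{1}{6}$ and the difference piece $\tfrac{1}{2}(\varphi_+-\varphi_-)$ contributes $\tfrac{1}{4}$, recovering the paper's $\tfrac{5}{12}$ (hence $\tfrac{5}{24}$ after the factor $\tfrac{1}{2\sqrt t}$); your time integrals $\int_0^1\sqrt{t}\,dt=\tfrac23$ and $\int_0^1 t\,dt=\tfrac12$ would in fact sharpen the constants in $A_2$ and $A_3$ further, still implying the stated bound.
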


\begin{remark}\label{rem:NewTerm}\rm
A comparison of Theorem \ref{thm:Multivariate main theorem} with Theorem 5.1 in \cite{KRT1} shows that the extension to vectors of general functionals of possibly non-symmetric and non-homogeneous infinite Rademacher sequences comes at the costs of an additional summand in the bound, namely
\begin{align*}
\EE \Big[ \Big\langle \frac{\absolute{p-q}}{\sqrt{pq}} \Big( \sum_{j=1}^d \absolute{DF_j} \Big)^2, \sum_{i=1}^d \absolute{-DL^{-1}F_i} \Big\rangle_{\ell^2(\NN)} \Big]\,.
\end{align*}
However, resorting to the case where the underlying Rademacher sequence is symmetric, i.e., if $p_k=q_k=1/2$, for every $k \in \NN$, this additional summand vanishes and our bound in Theorem \ref{thm:Multivariate main theorem} coincides with the one from \cite{KRT1} with an improvement by a factor $1/2$ on the constant in front of the third term.
\end{remark}

The proof of Theorem \ref{thm:Multivariate main theorem} relies on two multivariate integration by parts formulae, a Gaussian one and an approximate one from Malliavin calculus which combines \eqref{Integration by parts formula} with a multivariate chain rule for the discrete gradient operator. We start by recalling the multivariate Gaussian integration by parts formula from Equation (A.41) in \cite{Tal}.

\begin{lemma}\label{Multivariate integration by parts lemma}
Fix $d \in \NN$ and let $\bN:=(N_1, \dotsc, N_d)$ be a centred Gaussian random vector with symmetric and positive semidefinite covariance matrix $\Sigma:=(\Sigma_{ij})_{i,j=1}^d$. Furthermore, let $g: \RR^d \rightarrow \RR$ be a partially differentiable function with bounded partial derivatives and $\EE[\absolute{N_i g(\bN)}] < \infty$, for every $i = 1, \dotsc, d$. Then, for every $i = 1, \dotsc, d$,
\begin{align*}
\EE[N_i g(\bN)] = \sum_{j=1}^d \Sigma_{ij} \EE \Big[ \frac{\partial}{\partial x_j} g(\bN) \Big]\,.
\end{align*}
\end{lemma}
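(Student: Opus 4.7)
The plan is to prove the identity first when $\Sigma$ is strictly positive definite by a direct computation on the Gaussian density, and then to remove the non-degeneracy hypothesis by a standard perturbation argument. This is the classical route to Stein-type identities for multivariate Gaussians, and it fits the present setting because the hypotheses on $g$ (bounded partial derivatives, integrability of $N_i g(\bN)$) are tailored to make both steps go through.

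For the non-degenerate step I would write down the density $\phi(x) = (2\pi)^{-d/2} (\det \Sigma)^{-1/2} \exp(-\tfrac{1}{2} x^\top \Sigma^{-1} x)$ of $\bN$ and note the elementary identity $\nabla \phi(x) = -(\Sigma^{-1} x)\, \phi(x)$. Multiplying this vector identity by $\Sigma$ coordinate-wise yields the Gaussian score identity
\[
x_i\, \phi(x) = -\sum_{j=1}^d \Sigma_{ij}\, \frac{\partial \phi}{\partial x_j}(x), \qquad x \in \RRd.
\]
Plugging this into $\EE[N_i g(\bN)] = \int_{\RRd} x_i g(x)\, \phi(x)\, dx$ and performing one integration by parts in each $x_j$ coordinate delivers the claim in the non-degenerate case. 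The boundary contributions vanish because $|g|$ grows at most linearly (since its partial derivatives are bounded), while $\phi$ and its derivatives decay faster than any polynomial, so all surface integrals at infinity are zero.

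For a merely positive semidefinite $\Sigma$ I would perturb: let $Z = (Z_1, \dots, Z_d)$ be a standard Gaussian vector independent of $\bN$ and set $\bN_\e := \bN + \e Z$, which is a centred Gaussian vector with strictly positive definite covariance $\Sigma + \e^2 I_d$. The non-degenerate case gives
\[
\EE\big[(N_i + \e Z_i)\, g(\bN_\e)\big] = \sum_{j=1}^d (\Sigma_{ij} + \e^2 \delta_{ij})\, \EE\Big[\frac{\partial}{\partial x_j} g(\bN_\e)\Big],
\]
and I would send $\e \downarrow 0$. The right-hand side tends to $\sum_j \Sigma_{ij}\, \EE[\partial_j g(\bN)]$ by dominated convergence, using that each $\partial_j g$ is bounded. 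On the left, $\e Z_i g(\bN_\e) \to 0$ in $L^1$ via the linear growth bound $|g(y)| \leq |g(0)| + C|y|$ together with $Z_i \in L^p$ for every $p$, while $N_i g(\bN_\e) \to N_i g(\bN)$ in $L^1$ by continuity of $g$, the same linear-growth bound, and the standing hypothesis $\EE[|N_i g(\bN)|] < \infty$.

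The only subtle point is the limit argument in the semidefinite case, where one needs uniform-in-$\e$ control of $N_i g(\bN_\e)$; once the linear growth of $g$ is exploited against the Gaussian moments this is routine. If one wishes to allow $\partial_j g$ merely bounded and measurable (rather than continuous), one can smooth $g$ by convolution with a mollifier before applying the non-degenerate identity, and then remove the mollifier; for the smooth test functions entering the $d_4$-distance used throughout the paper no such regularisation is required.
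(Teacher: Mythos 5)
Your argument is correct in substance, but it is worth pointing out that the paper does not prove this lemma at all: it is simply quoted as the multivariate Gaussian integration by parts formula, Equation (A.41) in \cite{Tal}. So your proposal replaces a citation by a self-contained proof, and the route you choose (the score identity $x_i\varphi(x)=-\sum_j\Sigma_{ij}\partial_j\varphi(x)$ for the non-degenerate density, one-dimensional integration by parts in each coordinate with boundary terms killed by the Gaussian decay against the linear growth of $g$, and then a perturbation $\bN_\e=\bN+\e Z$ to handle merely positive semidefinite $\Sigma$) is exactly the standard argument one would find behind such a reference; an equally quick alternative in the degenerate case is to write $\bN=A G$ with $AA^{\top}=\Sigma$ and $G$ standard Gaussian and apply the one-dimensional Stein identity to $u\mapsto g(Au)$. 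What your version buys is self-containedness and clarity about where each hypothesis enters; what the paper's citation buys is brevity, since the lemma is classical.

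One caveat you should make explicit: in the limit $\e\downarrow 0$ the dominated convergence on the right-hand side needs not only boundedness of $\partial_j g$ but also $\partial_j g(\bN_\e)\to\partial_j g(\bN)$ almost surely, i.e.\ continuity of $\partial_j g$ at $\bN$ almost surely; mere partial differentiability with bounded partials, as literally stated in the lemma, does not guarantee this, and your mollification remark does not remove the issue because the same convergence is needed again after undoing the mollifier when the law of $\bN$ is singular. This is harmless for the way the lemma is used in the paper, where it is applied to the functions $f_{i,t}$ built from four times differentiable test functions with bounded derivatives, so their partials are continuous (indeed Lipschitz); but if you want the statement in the stated generality you should either add this continuity explicitly or switch to the factorization $\bN=AG$ and argue on the support of $\bN$.
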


The following lemma contains a multivariate chain rule for the discrete gradient operator, which is a generalization of Proposition 2.1 in \cite{PrivaultTorrisi} to the $d$-dimensional case. Also note that it not only generalizes Lemma 5.1 in \cite{KRT1} to the case where the underlying Rademacher sequence is non-symmetric and non-homogeneous, but also improves on the constants in the bound for the remainder term. For these reasons, we include a detailed proof. 

\begin{lemma}\label{Multivariate chain rule}
Let $\bF$ be a random vector of Rademacher functionals as in Theorem \ref{thm:Multivariate main theorem}. Furthermore, let $f: \RR^d \rightarrow \RR$ be a thrice partially differentiable function. Then, for every $k \in \NN$,
\begin{equation}\label{Multivariate chain rule equation 1}
\begin{split}
D_kf(\bF) &= \sum_{i=1}^d \frac{\partial}{\partial x_i} f(\bF)D_kF_i  - \frac{X_k}{4\sqrt{p_kq_k}} \sum_{i,j=1}^d \Big( \frac{\partial^2}{\partial x_i \partial x_j}f(\bF_k^+) + \frac{\partial^2}{\partial x_i \partial x_j}f(\bF_k^-) \Big)(D_kF_i)(D_kF_j) + R_k(\bF)\,
\end{split}
\end{equation}
with $\bF_k^\pm:=\big((F_1)_k^\pm,\ldots,(F_d)_k^\pm\big)$ and a remainder term $R_k(\bF)$ that fulfils
\begin{align}\label{Multivariate chain rule equation 2}
R_k(\bF) \leq \frac{5}{12p_kq_k} \sum_{i,j,\ell=1}^d \Bignorm{\frac{\partial^3}{\partial x_i \partial x_j \partial x_\ell}f}_\infty \absolute{(D_kF_i)(D_kF_j)(D_kF_\ell)}
\end{align}
for every $k \in \NN$.
\end{lemma}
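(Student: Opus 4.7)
The plan is to Taylor expand $f(\bF_k^+) - f(\bF_k^-)$ around the random point $\bF$, which coincides with $\bF_k^+$ on $\{X_k=+1\}$ and with $\bF_k^-$ on $\{X_k=-1\}$. By \eqref{Pathwise gradient} one has $D_k f(\bF) = \sqrt{p_k q_k}\bigl(f(\bF_k^+) - f(\bF_k^-)\bigr)$; setting $\Delta_i := (F_i)_k^+ - (F_i)_k^- = D_k F_i/\sqrt{p_k q_k}$, I will apply Taylor's theorem with Lagrange third-order remainder on each of the two events $\{X_k=+1\}$ and $\{X_k=-1\}$. On the first, expand $f(\bF_k^-)$ around $\bF_k^+ = \bF$ with step $-\Delta$; on the second, expand $f(\bF_k^+)$ around $\bF_k^- = \bF$ with step $+\Delta$. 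The first-order contributions in both cases yield $\sum_i \partial_i f(\bF)\Delta_i$, and the second-order contributions unify into $-\frac{X_k}{2}\sum_{i,j}\partial_i\partial_j f(\bF)\Delta_i\Delta_j$, with the opposite signs across the two cases absorbed by $X_k$. The Taylor remainder is bounded uniformly by $\frac{1}{6}\sum_{i,j,\ell}\|\partial_i\partial_j\partial_\ell f\|_\infty|\Delta_i\Delta_j\Delta_\ell|$.

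Next I will symmetrize the Hessian term so that $\partial_i\partial_j f$ is evaluated at $\bF_k^+$ and $\bF_k^-$ with equal weight, as required by \eqref{Multivariate chain rule equation 1}. A short case distinction using $X_k^2=1$ yields the algebraic identity
\[
-\frac{X_k}{2}\partial_i\partial_j f(\bF) = -\frac{X_k}{4}\bigl[\partial_i\partial_j f(\bF_k^+) + \partial_i\partial_j f(\bF_k^-)\bigr] + \frac{1}{4}\bigl[\partial_i\partial_j f(\bF_k^-) - \partial_i\partial_j f(\bF_k^+)\bigr],
\]
where the symmetrization error (the second summand on the right) is in fact independent of $X_k$. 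The mean value theorem then bounds this error by $\frac{1}{4}\sum_\ell\|\partial_i\partial_j\partial_\ell f\|_\infty|\Delta_\ell|$, which becomes a third-order object once paired with $\Delta_i\Delta_j$.

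Combining the Taylor coefficient $\frac{1}{6}$ with the symmetrization coefficient $\frac{1}{4}$ gives the sharp constant $\frac{1}{6}+\frac{1}{4}=\frac{5}{12}$. Multiplying the whole identity by $\sqrt{p_kq_k}$ and substituting $|\Delta_i|=|D_kF_i|/\sqrt{p_kq_k}$ converts the prefactor $\sqrt{p_kq_k}\cdot(p_kq_k)^{-3/2}$ into $(p_kq_k)^{-1}$, delivering precisely the bound \eqref{Multivariate chain rule equation 2}. The main obstacle is the symmetrization step: Taylor expansion naturally produces the Hessian at the random point $\bF$, and only the $X_k^2=1$ trick combined with the case analysis for $\bF$ allows us to trade this for a symmetric average at the cost of a deterministic (in $X_k$) correction that folds cleanly into the Taylor remainder to yield the constant $5/12$.
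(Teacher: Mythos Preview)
Your proposal is correct and follows essentially the same approach as the paper: Taylor expand $f(\bF_k^+)-f(\bF_k^-)$ around $\bF$ using that $\bF\in\{\bF_k^+,\bF_k^-\}$, obtain the second-order term $-\tfrac{X_k}{2}\sum_{i,j}\partial_i\partial_j f(\bF)\Delta_i\Delta_j$, then symmetrize the Hessian evaluation point to the average of $\bF_k^\pm$ and combine the Taylor remainder $\tfrac{1}{6}$ with the symmetrization error $\tfrac{1}{4}$ to get $\tfrac{5}{12}$. The only cosmetic difference is that the paper writes the case split via the indicator identities $F_k^\pm - F = \pm(D_kF/\sqrt{p_kq_k})\mathbf{1}_{\{X_k=\mp 1\}}$ and performs the symmetrization by two first-order Taylor expansions of $\partial_i\partial_j f(\bF)$ at $\bF_k^+$ and $\bF_k^-$, whereas you use the equivalent algebraic identity and the mean value theorem directly.
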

\begin{proof}
Fix $k \in \NN$ and observe that
\begin{align}\label{Multivariate chain rule proof equation 1}
D_kf(\bF) &= \sqrt{p_kq_k}\, (f(\bF_k^+) - f(\bF_k^-)) = \sqrt{p_kq_k} \,(f(\bF_k^+) - f(\bF)) - \sqrt{p_kq_k}\, (f(\bF_k^-) - f(\bF))\,.
\end{align}
Now, a Taylor series expansion of $f$ at $\bF$ yields that, for every $\bm{x}:=(x_1, \dotsc, x_d) \in \RR^d$,
\begin{align*}
f(\bm{x}) - f(\bF) &= \sum_{i=1}^d \frac{\partial}{\partial x_i} f(\bF)(x_i-F_i) + \frac{1}{2}\sum_{i,j=1}^d \frac{\partial^2}{\partial x_i \partial x_j} f(\bF)(x_i-F_i)(x_j-F_j) \notag\\
&\phantom{{}={}} +\frac{1}{6} \sum_{i,j,\ell=1}^d \frac{\partial^3}{\partial x_i \partial x_j \partial x_\ell}f(\bF+\theta(\bm{x}-\bF)) (x_i-F_i)(x_j-F_j)(x_\ell-F_\ell)
\end{align*}
with some $\theta := \theta(\bm{x}, \bF) \in (0,1)$. By re-writing each of the quantities $f(\bF_k^+) - f(\bF)$ and $f(\bF_k^-) - f(\bF)$ in this way, it follows from \eqref{Multivariate chain rule proof equation 1} that, for every $k \in \NN$,
\begin{align}\label{Multivariate chain rule proof equation 2}
D_kf(\bF) &= \sqrt{p_kq_k} \sum_{i=1}^d \frac{\partial}{\partial x_i} f(\bF)((F_i)_k^+ - F_i) + \frac{1}{2}\sqrt{p_kq_k} \sum_{i,j=1}^d \frac{\partial^2}{\partial x_i \partial x_j} f(\bF)((F_i)_k^+ - F_i)((F_j)_k^+ - F_j) \notag\\
&\qquad + R_1(\bF, \bF_k^+) - \sqrt{p_kq_k} \sum_{i=1}^d \frac{\partial}{\partial x_i} f(\bF)((F_i)_k^- - F_i) \notag\\
&\qquad -\frac{1}{2}\sqrt{p_kq_k} \sum_{i,j=1}^d \frac{\partial^2}{\partial x_i \partial x_j} f(\bF)((F_i)_k^- - F_i)((F_j)_k^- - F_j) - R_2(\bF, \bF_k^-) \notag\\[5pt]
&= \sum_{i=1}^d \frac{\partial}{\partial x_i} f(\bF)D_kF_i +\frac{1}{2}\sqrt{p_kq_k} \sum_{i,j=1}^d \frac{\partial^2}{\partial x_i \partial x_j} f(\bF)(((F_i)_k^+ - F_i)((F_j)_k^+ - F_j) - ((F_i)_k^- - F_i)((F_j)_k^- - F_j)) \notag\\[5pt]
&\qquad+ R_1(\bF, \bF_k^+) - R_2(\bF, \bF_k^-)\,,
\end{align}
where from the identities
\begin{align}\label{F_k - F +}
F_k^+ - F = (F_k^+ - F_k^-)\1_{\{ X_k = -1 \}} = \frac{1}{\sqrt{p_kq_k}} (D_kF) \1_{\{ X_k = -1 \}}
\end{align}
and
\begin{align}\label{F_k - F -}
F_k^- - F = (F_k^- - F_k^+)\1_{\{ X_k = +1 \}} = -\frac{1}{\sqrt{p_kq_k}} (D_kF) \1_{\{ X_k = +1 \}}
\end{align}
it follows that, for every $k \in \NN$,
\begin{align}\label{Multivariate chain rule proof R_1}
\absolute{R_1(\bF, \bF_k^+)} &\leq \frac{1}{6}\sqrt{p_kq_k} \sum_{i,j,\ell=1}^d \Bignorm{\frac{\partial^3}{\partial x_i \partial x_j \partial x_\ell}f}_\infty \absolute{((F_i)_k^+ - F_i)((F_j)_k^+ - F_j)((F_\ell)_k^+ - F_\ell)} \notag\\
&= \frac{1}{6p_kq_k} \sum_{i,j,\ell=1}^d \Bignorm{\frac{\partial^3}{\partial x_i \partial x_j \partial x_\ell}f}_\infty \absolute{(D_kF_i)(D_kF_j)(D_kF_\ell)}\1_{\{ X_k=-1 \}}
\end{align}
and
\begin{align}\label{Multivariate chain rule proof R_2}
\absolute{R_2(\bF, \bF_k^-)} &\leq \frac{1}{6}\sqrt{p_kq_k} \sum_{i,j,\ell=1}^d \Bignorm{\frac{\partial^3}{\partial x_i \partial x_j \partial x_\ell}f}_\infty \absolute{((F_i)_k^- - F_i)((F_j)_k^- - F_j)((F_\ell)_k^- - F_\ell)} \notag\\
&= \frac{1}{6p_kq_k} \sum_{i,j,\ell=1}^d \Bignorm{\frac{\partial^3}{\partial x_i \partial x_j \partial x_\ell}f}_\infty \absolute{(D_kF_i)(D_kF_j)(D_kF_\ell)}\1_{\{ X_k=+1 \}}\,.
\end{align}
Again, by virtue of \eqref{F_k - F +} and \eqref{F_k - F -}, for every $k \in \NN$, the second summand on the right hand side of \eqref{Multivariate chain rule proof equation 2} can be rewritten as
\begin{align}\label{Multivariate chain rule proof equation 3}
&\frac{1}{2}\sqrt{p_kq_k} \sum_{i,j=1}^d \frac{\partial^2}{\partial x_i \partial x_j} f(\bF)(((F_i)_k^+ - F_i)((F_j)_k^+ - F_j) - ((F_i)_k^- - F_i)((F_j)_k^- - F_j)) \notag\\
&= \frac{1}{2\sqrt{p_kq_k}} \sum_{i,j=1}^d \frac{\partial^2}{\partial x_i \partial x_j} f(\bF)(D_kF_i)(D_kF_j)(\1_{\{ X_k = -1\}} - \1_{\{ X_k = +1\}}) \notag \\
&= -\frac{X_k}{2\sqrt{p_kq_k}} \sum_{i,j=1}^d \frac{\partial^2}{\partial x_i \partial x_j} f(\bF)(D_kF_i)(D_kF_j)\,.
\end{align}
Another Taylor series expansion of $\frac{\partial^2}{\partial x_i \partial x_j}f$ at $\bF_k^+$ and $\bF_k^-$, respectively, yields that, for every $i,j,k \in \NN$,
\begin{align*}
\frac{\partial^2}{\partial x_i \partial x_j}f(\bF) = \frac{\partial^2}{\partial x_i \partial x_j}f(\bF_k^+) + \sum_{\ell=1}^d \frac{\partial^3}{\partial x_\ell \partial x_i \partial x_j}f(\bF_k^+ + \theta_1(\bF - \bF_k^+))(F_\ell - (F_\ell)_k^+)\,\phantom{,}
\end{align*}
and
\begin{align*}
\frac{\partial^2}{\partial x_i \partial x_j}f(\bF) = \frac{\partial^2}{\partial x_i \partial x_j}f(\bF_k^-) + \sum_{\ell=1}^d \frac{\partial^3}{\partial x_\ell \partial x_i \partial x_j}f(\bF_k^- + \theta_2(\bF - \bF_k^-))(F_\ell - (F_\ell)_k^-)\,,
\end{align*}
where $\theta_1:=\theta_1(\bF, \bF_k^+)\in(0,1)$ and $\theta_2:=\theta_2(\bF, \bF_k^-) \in (0,1)$. This adds up to
\begin{align*}
\frac{\partial^2}{\partial x_i \partial x_j}f(\bF) &= \frac{1}{2}\Big( \frac{\partial^2}{\partial x_i \partial x_j}f(\bF_k^+) + \frac{\partial^2}{\partial x_i \partial x_j}f(\bF_k^-) \Big)+\frac{1}{2}\sum_{\ell=1}^d \frac{\partial^3}{\partial x_\ell \partial x_i \partial x_j}f(\bF_k^+ + \theta_1(\bF - \bF_k^+))(F_\ell - (F_\ell)_k^+)\\
&\qquad+\frac{1}{2}\sum_{\ell=1}^d \frac{\partial^3}{\partial x_\ell \partial x_i \partial x_j}f(\bF_k^- + \theta_2(\bF - \bF_k^-))(F_\ell - (F_\ell)_k^-)
\end{align*}
for every $i,j,k \in \NN$, and thus, it follows from \eqref{Multivariate chain rule proof equation 3} that for every $k \in \NN$
\begin{align}\label{Multivariate chain rule proof equation 4}
&\frac{1}{2}\sqrt{p_kq_k} \sum_{i,j=1}^d \frac{\partial^2}{\partial x_i \partial x_j} f(\bF)(((F_i)_k^+ - F_i)((F_j)_k^+ - F_j) - ((F_i)_k^- - F_i)((F_j)_k^- - F_j)) \notag\\
&= -\frac{X_k}{4\sqrt{p_kq_k}} \sum_{i,j=1}^d \Big( \frac{\partial^2}{\partial x_i \partial x_j}f(\bF_k^+) + \frac{\partial^2}{\partial x_i \partial x_j}f(\bF_k^-) \Big)(D_kF_i)(D_kF_j) - R_3(\bF,\bF_k^+) - R_4(\bF,\bF_k^-)\,,
\end{align}
where by the fact that $\absolute{X_k} \leq 1$ for every $k \in \NN$ and another application of \eqref{F_k - F +} and \eqref{F_k - F -} it holds that
\begin{align}\label{Multivariate chain rule proof R_3}
\absolute{R_3(\bF,\bF_k^+)} &\leq \frac{1}{4\sqrt{p_kq_k}} \sum_{i,j,\ell=1}^d \Bignorm{\frac{\partial^3}{\partial x_i \partial x_j \partial x_\ell}f}_\infty \absolute{(D_kF_j)(D_kF_\ell)((F_i)_k^+ - F_i)} \notag\\
&= \frac{1}{4p_kq_k} \sum_{i,j,\ell=1}^d \Bignorm{\frac{\partial^3}{\partial x_i \partial x_j \partial x_\ell}f}_\infty \absolute{(D_kF_i)(D_kF_j)(D_kF_\ell)}\1_{\{ X_k = -1 \}}
\end{align}
and
\begin{align}\label{Multivariate chain rule proof R_4}
\absolute{R_4(\bF,\bF_k^-)} &\leq \frac{1}{4\sqrt{p_kq_k}} \sum_{i,j,\ell=1}^d \Bignorm{\frac{\partial^3}{\partial x_i \partial x_j \partial x_\ell}f}_\infty \absolute{(D_kF_j)(D_kF_\ell)((F_i)_k^- - F_i)} \notag\\
&= \frac{1}{4p_kq_k} \sum_{i,j,\ell=1}^d \Bignorm{\frac{\partial^3}{\partial x_i \partial x_j \partial x_\ell}f}_\infty \absolute{(D_kF_i)(D_kF_j)(D_kF_\ell)}\1_{\{ X_k = +1 \}}\,.
\end{align}
Combining \eqref{Multivariate chain rule proof equation 2} and \eqref{Multivariate chain rule proof equation 4} finally yields that for every $k \in \NN$
\begin{align*}
D_kf(\bF) &= \sum_{i=1}^d \frac{\partial}{\partial x_i} f(\bF)D_kF_i - \frac{X_k}{4\sqrt{p_kq_k}} \sum_{i,j=1}^d \Big( \frac{\partial^2}{\partial x_i \partial x_j}f(\bF_k^+) + \frac{\partial^2}{\partial x_i \partial x_j}f(\bF_k^-) \Big)(D_kF_i)(D_kF_j) \notag\\[5pt]
&\qquad+ R_1(\bF, \bF_k^+) - R_2(\bF, \bF_k^-) - R_3(\bF,\bF_k^+) - R_4(\bF,\bF_k^-)\,,
\end{align*}
where because of \eqref{Multivariate chain rule proof R_1}, \eqref{Multivariate chain rule proof R_2}, \eqref{Multivariate chain rule proof R_3} and \eqref{Multivariate chain rule proof R_4} we have that
\begin{align*}
&\absolute{R_1(\bF, \bF_k^+)} + \absolute{R_2(\bF, \bF_k^-)} + \absolute{R_3(\bF,\bF_k^+)} + \absolute{R_4(\bF,\bF_k^-)} \leq\frac{5}{12p_kq_k} \sum_{i,j,\ell=1}^d \Bignorm{\frac{\partial^3}{\partial x_i \partial x_j \partial x_\ell}f}_\infty \absolute{(D_kF_i)(D_kF_j)(D_kF_\ell)}\,.
\end{align*}
The proof is thus complete.
\end{proof}

Let us now turn to the already announced multivariate approximate integration by parts formula. The next result not only generalizes Lemma 5.2 in \cite{KRT1} to the case in which the underlying Rademacher sequence is allowed to be non-symmetric and non-homogeneous, but also improves the constants in the bound for the remainder term. We emphasize that Lemma \ref{Multivariate approximate integration by parts} is the first instance where the additional boundary term discussed in Remark \ref{rem:NewTerm} shows up.

\begin{lemma}\label{Multivariate approximate integration by parts}
Let $\bF$ be a vector of Rademacher functionals as in Theorem \ref{thm:Multivariate main theorem}. Furthermore, let $f: \RR^d \rightarrow \RR$ be a thrice partially differentiable function with bounded partial derivatives. Then, for every $i=1, \dotsc, d$,
\begin{align*}
\EE[F_i f(\bF)] &= \sum_{j=1}^d \EE \Big[ \frac{\partial}{\partial x_j}f(\bF) \langle DF_j, -DL^{-1}F_i \rangle_{\ell^2(\NN)} \Big] + \EE[\langle R(\bF), -DL^{-1}F_i \rangle_{\ell^2(\NN)}]
\end{align*}
with a remainder $R(\bF)$ that satisfies the estimate
\begin{equation}\label{Multivariate approximate integration by parts equation 2}
\begin{split}
\absolute{\EE[\langle R(\bF), -DL^{-1}F_i \rangle_{\ell^2(\NN)}]} &\leq \frac{1}{2} M_2(f) \EE \Big[ \Big\langle \frac{\absolute{p-q}}{\sqrt{pq}} \Big( \sum_{j=1}^d \absolute{DF_j} \Big)^2, \absolute{-DL^{-1}F_i} \Big\rangle_{\ell^2(\NN)} \Big]\\
&\qquad+ \frac{5}{12} M_3(f) \EE \Big[ \Big\langle \frac{1}{pq} \Big( \sum_{j=1}^d \absolute{DF_j} \Big)^3, \absolute{-DL^{-1}F_i} \Big\rangle_{\ell^2(\NN)} \Big]\,.
\end{split}
\end{equation}
\end{lemma}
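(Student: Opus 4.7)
The plan is to start from the discrete integration by parts formula \eqref{Integration by parts formula}, which since $\EE[F_i]=0$ reads
\[
\EE[F_i f(\bF)] = \EE[\langle -DL^{-1}F_i,\, Df(\bF)\rangle_{\ell^2(\NN)}],
\]
and then to substitute the chain rule expansion \eqref{Multivariate chain rule equation 1} for $D_k f(\bF)$ term by term. This decomposes the right hand side into (I) the ``linear'' term, which will become the claimed main term; (II) a ``Hessian'' term involving $X_k/(4\sqrt{p_kq_k})$; and (III) a third-order remainder governed by $R_k(\bF)$. The main term (I) is, by linearity, exactly $\sum_{j=1}^d \EE[\tfrac{\partial}{\partial x_j}f(\bF)\,\langle DF_j,-DL^{-1}F_i\rangle_{\ell^2(\NN)}]$, so no further work is needed there; all the analysis is concentrated in (II) and (III).

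The key step, and the one where the non-symmetric case genuinely differs from the setting of \cite{KRT1}, is the treatment of term (II). Here I would exploit that $\bF_k^{\pm}$, $D_kF_i$, $D_kF_j$ and $D_kL^{-1}F_i$ are all $P$-almost surely independent of $X_k$; this is immediate from the definitions since conditioning $X_k$ to $\pm 1$ does not depend on $X_k$, and similarly for the discrete gradient. Thus, conditioning each summand of (II) on $\sigma(X_\ell:\ell\ne k)$ and using $\EE[X_k]=p_k-q_k$, one replaces the factor $X_k$ by the deterministic quantity $p_k-q_k$. From there, the trivial estimate $|\tfrac{\partial^2 f}{\partial x_i\partial x_j}(\bF_k^{\pm})|\le M_2(f)$ combined with $\sum_{i,j=1}^d|D_kF_i||D_kF_j|=(\sum_{j=1}^d|D_kF_j|)^2$ converts (II) into a bound of the form
\[
\tfrac{1}{4}\cdot 2\cdot M_2(f)\, \EE\Big[\sum_{k\in\NN}\tfrac{|p_k-q_k|}{\sqrt{p_kq_k}}\Big(\sum_{j=1}^d|D_kF_j|\Big)^2 |{-D_kL^{-1}F_i}|\Big],
\]
the factor $2$ coming from the two Hessian evaluations at $\bF_k^+$ and $\bF_k^-$. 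This matches the first summand in \eqref{Multivariate approximate integration by parts equation 2} with the stated constant $\tfrac12$. This is the main conceptual point: in the symmetric case $p_k=q_k=1/2$ one has $p_k-q_k=0$ and this contribution disappears, which is exactly the phenomenon announced in Remark \ref{rem:NewTerm}.

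For term (III), I would simply insert the pointwise bound \eqref{Multivariate chain rule equation 2} on $R_k(\bF)$, pull $M_3(f)$ out, and use again $\sum_{i,j,\ell=1}^d|D_kF_i D_kF_j D_kF_\ell|=(\sum_{j=1}^d|D_kF_j|)^3$; this directly produces the second summand in \eqref{Multivariate approximate integration by parts equation 2} with constant $\tfrac{5}{12}$. The main obstacle in the whole argument is really just the handling of (II): one has to recognize that the $X_k$ factor from the chain rule can be integrated out against the conditional expectation because every other factor in that term is independent of $X_k$, thereby producing the $|p-q|/\sqrt{pq}$ weight. The integrability/Fubini justifications needed to swap expectation and the $\ell^2(\NN)$ summation follow from the assumption $\EE[\|(pq)^{-1/4}DF_i\|_{\ell^4(\NN)}^4]<\infty$ together with \eqref{eq:MehlerInequality}, and are routine.
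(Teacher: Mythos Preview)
Your proposal is correct and follows essentially the same route as the paper: start from \eqref{Integration by parts formula}, substitute the chain rule \eqref{Multivariate chain rule equation 1}, keep the linear part as the main term, handle the Hessian part by using the independence of $X_k$ from the remaining factors to replace $X_k$ by $\EE[X_k]=p_k-q_k$ (after a Fubini step justified via the $\ell^4$-assumption and \eqref{eq:MehlerInequality}), and bound the third-order remainder directly with \eqref{Multivariate chain rule equation 2}. The paper additionally spells out the verification that $f(\bF)\in\dom(D)$ via the mean value theorem, but otherwise your sketch and the paper's argument coincide.
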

\begin{proof}
Fix $i=1, \dotsc, d$. By the integration by parts formula \eqref{Integration by parts formula} we have that
\begin{align}\label{Multivariate approximate integration by parts proof equation 1}
\EE[F_i f(\bF)] = \EE[ \langle Df(\bF), -DL^{-1}F_i \rangle_{\ell^2(\NN)}]\,.
\end{align}
Here, we implicitly used the fact that $f(\bF) \in \dom(D)$, which can be verified as follows. At first, by the mean value theorem it holds that for every $k \in \NN$
\begin{align*}
\absolute{D_kf(\bF)} &= \sqrt{p_kq_k}\,\absolute{f(\bF_k^+) - f(\bF_k^-)}= \sqrt{p_kq_k} \,\Bigabsolute{\sum_{i=1}^d \frac{\partial}{\partial x_i} f(\bF_k^- + \theta(\bF_k^+ - \bF_k^-))((F_i)_k^+ - (F_i)_k^-)}\\
&\leq \sqrt{p_kq_k}\, M_1(f) \sum_{i=1}^d \absolute{(F_i)_k^+ - (F_i)_k^-}= M_1(f) \sum_{i=1}^d \absolute{D_kF_i}\,,
\end{align*}
where $\theta \in (0,1)$. Thus, an application of the Cauchy-Schwarz inequality yields that
\begin{align}\label{Multivariate approximate integration by parts proof equation 2}
\EE[\norm{Df(\bF)}_{\ell^2(\NN)}^2] &= \EE \Big[ \sum_{k=1}^\infty (D_kf(\bF))^2 \Big] \leq (M_1(f))^2 \,\EE \Big[ \sum_{k=1}^\infty \Big( \sum_{i=1}^d \absolute{D_kF_i} \Big)^2 \Big] \leq d (M_1(f))^2 \,\EE \Big[ \sum_{k=1}^\infty \sum_{i=1}^d (D_kF_i)^2 \Big] \notag\\
&= d (M_1(f))^2 \sum_{i=1}^d \EE[\norm{DF_i}_{\ell^2(\NN)}^2] 
\end{align}
and finiteness of the right hand side in \eqref{Multivariate approximate integration by parts proof equation 2} follows from the assumptions that, for every $i=1, \dotsc, d$, $\frac{\partial}{\partial x_i}f$ is bounded and $F_i \in \dom(D)$. Now, by plugging \eqref{Multivariate chain rule equation 1} into \eqref{Multivariate approximate integration by parts proof equation 1} we immediately get
\begin{align}\label{Multivariate approximate integration by parts proof equation 3}
\EE[F_i f(\bF)] & = \sum_{j=1}^d \EE \Big[ \frac{\partial}{\partial x_j} f(\bF) \langle DF_j, -DL^{-1}F_i \rangle_{\ell^2(\NN)} \Big] \notag\\
&\qquad - \sum_{j,\ell=1}^d \EE \Big[ \Big\langle \frac{X}{4\sqrt{pq}} \Big( \frac{\partial^2}{\partial x_j \partial x_\ell}f(\bF^+) + \frac{\partial^2}{\partial x_j \partial x_\ell}f(\bF^-) \Big)(DF_j)(DF_\ell), -DL^{-1}F_i \Big\rangle_{\ell^2(\NN)} \Big] \notag\\[5pt]
&\qquad + \EE[\langle R_1(\bF), -DL^{-1}F_i \rangle_{\ell^2(\NN)}]
\end{align}
with $\bF^+ := (\bF_k^+)_{k \in \NN}$ and $\bF^- := (\bF_k^-)_{k \in \NN}$ as well as a remainder $R_1(\bF)$ which by \eqref{Multivariate chain rule equation 2} fulfils the estimate
\begin{align}\label{Multivariate approximate integration by parts proof remainder 1}
\absolute{\EE[\langle R_1(\bF), -DL^{-1}F_i \rangle_{\ell^2(\NN)}]} \leq \frac{5}{12} M_3(f) \EE \Big[ \Big\langle \frac{1}{pq} \Big( \sum_{j=1}^d \absolute{DF_j} \Big)^3, \absolute{-DL^{-1}F_i} \Big\rangle_{\ell^2(\NN)} \Big]\,.
\end{align}
As a consequence, we only need to further bound the second term in \eqref{Multivariate approximate integration by parts proof equation 3}. By virtue of the Cauchy-Schwarz inequality and \eqref{eq:MehlerInequality} we see that, for every $j,\ell \in \NN$,
\begin{align*}
&\EE \Big[ \sum_{k=1}^\infty \Bigabsolute{\frac{X_k}{4\sqrt{p_kq_k}} \Big( \frac{\partial^2}{\partial x_j \partial x_\ell}f(\bF_k^+) + \frac{\partial^2}{\partial x_j \partial x_\ell}f(\bF_k^-) \Big)(D_kF_j)(D_kF_\ell)(-D_kL^{-1}F_i)} \Big] \notag\\
&\leq \frac{1}{2} M_2(f) \EE \Big[ \sum_{k=1}^\infty \frac{1}{\sqrt{p_kq_k}} \absolute{(D_kF_j)(D_kF_\ell)(-D_kL^{-1}F_i)} \Big] \notag\\
&\leq \frac{1}{2} M_2(f) \Big( \EE \Big[ \sum_{k=1}^\infty \frac{1}{p_kq_k} (D_kF_j)^2(D_kF_\ell)^2 \Big] \Big)^{1/2} \Big( \EE \Big[ \sum_{k=1}^\infty (D_kL^{-1}F_i)^2 \Big] \Big)^{1/2} \notag\\
&\leq \frac{1}{2} M_2(f) \Big( \EE \Big[ \sum_{k=1}^\infty \frac{1}{p_kq_k} (D_kF_j)^4 \Big] \Big)^{1/4} \Big( \EE \Big[ \sum_{k=1}^\infty \frac{1}{p_kq_k} (D_kF_\ell)^4 \Big] \Big)^{1/4} \Big( \EE \Big[ \sum_{k=1}^\infty (D_kF_i)^2 \Big] \Big)^{1/2} \notag\\
&= \frac{1}{2} M_2(f) (\EE[\norm{(pq)^{-1/4} DF_j}_{\ell^4(\NN)}^4])^{1/4} (\EE[\norm{(pq)^{-1/4} DF_\ell}_{\ell^4(\NN)}^4])^{1/4} (\EE[\norm{DF_i}_{\ell^2(\NN)}^2])^{1/2} 
\end{align*}
and finiteness of this expression follows from the assumptions that $F_i \in \dom(D)$ and $\norm{(pq)^{-1/4} DF_i}_{\ell^2(\NN)} < \infty$ for every $i=1, \dotsc, d$. Thus, an exchange of expectation and summation is valid due to the Fubini-Tonelli theorem, and the independence of $X_k$ and $(\frac{\partial^2}{\partial x_j \partial x_\ell}f(\bF_k^+) + \frac{\partial^2}{\partial x_j \partial x_\ell}f(\bF_k^-))(D_kF_j)(D_kF_\ell)(-D_kL^{-1}F_i)$, for every $k \in \NN$, yields that, for every $j,\ell \in \NN$,
\begin{align}\label{Multivariate approximate integration by parts proof equation 5}
&\EE \Big[ \Big\langle \frac{X}{4\sqrt{pq}} \Big( \frac{\partial^2}{\partial x_j \partial x_\ell}f(\bF^+) + \frac{\partial^2}{\partial x_j \partial x_\ell}f(\bF^-) \Big)(DF_j)(DF_\ell), -DL^{-1}F_i \Big\rangle_{\ell^2(\NN)} \Big] \notag\\
&= \sum_{k=1}^\infty \frac{p_k-q_k}{4\sqrt{p_kq_k}} \EE \Big[ \Big( \frac{\partial^2}{\partial x_j \partial x_\ell}f(\bF_k^+) + \frac{\partial^2}{\partial x_j \partial x_\ell}f(\bF_k^-) \Big)(D_kF_j)(D_kF_\ell)(-D_kL^{-1}F_i) \Big] \notag \\
&= \EE \Big[ \Big\langle \frac{p-q}{4\sqrt{pq}} \Big( \frac{\partial^2}{\partial x_j \partial x_\ell}f(\bF^+) + \frac{\partial^2}{\partial x_j \partial x_\ell}f(\bF^-) \Big)(DF_j)(DF_\ell), -DL^{-1}F_i \Big\rangle_{\ell^2(\NN)} \Big]\,.
\end{align}
By plugging \eqref{Multivariate approximate integration by parts proof equation 5} into \eqref{Multivariate approximate integration by parts proof equation 3} we then get 
\begin{align}\label{Multivariate approximate integration by parts proof equation 6}
\EE[F_i f(\bF)] &= \sum_{j=1}^d \EE \Big[ \frac{\partial}{\partial x_j} f(\bF) \langle DF_j, -DL^{-1}F_i \rangle_{\ell^2(\NN)} \Big] + \EE[\langle R_1(\bF), -DL^{-1}F_i \rangle_{\ell^2(\NN)}]\notag\\
&\qquad\qquad - \EE[\langle R_2(\bF), -DL^{-1}F_i \rangle_{\ell^2(\NN)}]
\end{align}
with a remainder term
\begin{align*}
R_2(\bF) := \frac{p-q}{4\sqrt{pq}} \sum_{j,\ell=1}^\infty \Big( \frac{\partial^2}{\partial x_j \partial x_\ell}f(\bF^+) + \frac{\partial^2}{\partial x_j \partial x_\ell}f(\bF^-) \Big)(DF_j)(DF_\ell)
\end{align*}
satisfying
\begin{align}\label{Multivariate approximate integration by parts proof remainder 2}
&\absolute{\EE[\langle R_2(\bF), -DL^{-1}F_i \rangle_{\ell^2(\NN)}]} \leq \frac{1}{2} M_2(f) \EE \Big[ \Big\langle \frac{\absolute{p-q}}{\sqrt{pq}} \Big( \sum_{j=1}^d \absolute{DF_j} \Big)^2, \absolute{-DL^{-1}F_i} \Big\rangle_{\ell^2(\NN)} \Big]\,.
\end{align}
Finally, the assertion follows from \eqref{Multivariate approximate integration by parts proof equation 6} upon putting $R(\bF) := R_1(\bF) - R_2(\bF)$ and using the bounds in \eqref{Multivariate approximate integration by parts proof remainder 1} and \eqref{Multivariate approximate integration by parts proof remainder 2}. 
\end{proof}

\begin{remark}\rm
In the symmetric case where the underlying Rademacher sequence  satisfies $p_k=q_k=1/2$ for every $k \in \NN$, the bound for the remainder term in \eqref{Multivariate approximate integration by parts equation 2} simplifies to
\begin{align*}
\absolute{\EE[\langle R(\bF), -DL^{-1}F_i \rangle_{\ell^2(\NN)}]} &\leq \frac{5}{3} M_3(f) \EE \Big[ \Big\langle \Big( \sum_{j=1}^d \absolute{DF_j} \Big)^3, \absolute{-DL^{-1}F_i} \Big\rangle_{\ell^2(\NN)} \Big]\,,
\end{align*}
since $p_k-q_k=0$, for every $k \in \NN$.
\end{remark}

With both integration by parts formulae at hand we can now turn to the proof of Theorem \ref{thm:Multivariate main theorem}. We will use an interpolation technique that is known as the `smart path method' in the literature, cf.\ Section 2.4 in \cite{Tal}. This method has already found applications within the Malliavin-Stein method for multivariate normal approximation in the framework considering non-linear functionals of Gaussian \cite{NoudinPeccatiReveillac} and Poisson random measures \cite{PeccatiZheng}. We follow the lines of the proof of Theorem 4.2 in \cite{PeccatiZheng} until we have to use our discrete integration by parts formula developed in Lemma \ref{Multivariate approximate integration by parts}. Without loss of generality, we can and will from now on assume that $\bF$ and $\bN$ are independent.

\begin{proof}[Proof of Theorem \ref{thm:Multivariate main theorem}]
Let $g: \RR^d \to \RR$ be a four times partially differentiable function with bounded partial derivatives satisfying $M_1(g), M_2(g), M_3(g), M_4(g) \leq 1$. Consider the function $\Psi : \RR \rightarrow \RR$ given by
\begin{align}\label{Multivariate main theorem proof Psi}
\Psi(t) := \EE[g(\sqrt{1-t} \,\bF + \sqrt{t}\, \bN)]\,,\qquad t\in[0,1]\,.
\end{align}
Then, by the mean value theorem we have that
\begin{align}\label{Multivariate main theorem proof equation 1}
\absolute{\EE[g(\bF)] - \EE[g(\bN)]} = \absolute{\Psi(0) - \Psi(1)} \leq \sup_{t \in (0,1)} \absolute{\Psi'(t)}\,,
\end{align}
where, for every $t \in (0,1)$, $\Psi'$ is given by
\begin{align}\label{Multivariate main theorem proof equation 2}
\Psi'(t) &= \EE \Big[ \frac{d}{dt} g(\sqrt{1-t} \,\bF + \sqrt{t} \,\bN) \Big] = \sum_{i=1}^d \EE \Big[ \frac{\partial}{\partial x_i} g(\sqrt{1-t}\, \bF + \sqrt{t}\, \bN) \Big( \frac{1}{2\sqrt{t}}N_i - \frac{1}{2\sqrt{1-t}}F_i \Big) \Big] \notag\\
&= \frac{1}{2\sqrt{t}}A_t - \frac{1}{2\sqrt{1-t}}B_t
\end{align}
with
\begin{align*}
A_t := \sum_{i=1}^d \EE \Big[ \frac{\partial}{\partial x_i} g(\sqrt{1-t} \,\bF + \sqrt{t} \,\bN) N_i \Big] \quad \text{and} \quad B_t := \sum_{i=1}^d \EE \Big[ \frac{\partial}{\partial x_i} g(\sqrt{1-t} \,\bF + \sqrt{t} \,\bN) F_i \Big]\,.
\end{align*}

Now, by independence of $\bF$ and $\bN$ as well as by Fubini's theorem we have that, for every $t \in (0,1)$,
\begin{align}\label{Multivariate main theorem proof A_t 1}
A_t &= \sum_{i=1}^d \EE_ {\bN} \Big[ \EE_ {\bF} \Big[ \frac{\partial}{\partial x_i} g(\sqrt{1-t}\, \bF + \sqrt{t}\, \bN) \Big] N_i \Big]\,,
\end{align}
where $\EE_ {\bF}$ and $\EE_ {\bN}$ denote the expectations with respect to the distributions of $\bF$ and $\bN$, respectively. Using the functions $f_{i,t} : \R^d \rightarrow \R$ given by
\begin{align*}
f_{i,t}(\bm{x}) := \EE_ {\bF} \Big[ \frac{\partial}{\partial x_i} g(\sqrt{1-t}\, \bF + \sqrt{t}\, \bm{x}) \Big]\,,\qquad\bm{x}\in\RRd\,,
\end{align*}
\eqref{Multivariate main theorem proof A_t 1} can be rewritten as $A_t = \sum_{i=1}^d \EE_ {\bN}[f_{i,t}(\bN) N_i]$. Thus, by the integration by parts formula in Lemma \ref{Multivariate integration by parts lemma} we deduce that, for every $t \in (0,1)$,
\begin{align}\label{Multivariate main theorem proof A_t 3}
A_t &= \sum_{i,j=1}^d \Sigma_{ij} \EE_ {\bN} \Big[ \frac{\partial}{\partial x_j} f_{i,t}(\bN) \Big] = \sqrt{t} \sum_{i,j=1}^d \Sigma_{ij} \EE_ {\bN} \Big[ \EE_ {\bF} \Big[ \frac{\partial^2}{\partial x_j \partial x_i} g(\sqrt{1-t}\, \bF + \sqrt{t}\, \bN) \Big] \Big]\,,
\end{align}
where the exchange of differentiation and expectation in the last step of \eqref{Multivariate main theorem proof A_t 3} is valid since $\frac{\partial}{\partial x_i}g$ is bounded. Again, using the independence of $\bF$ and $\bN$ together with Fubini's theorem yields that, for every $t \in (0,1)$,
\begin{align}\label{Multivariate main theorem proof A_t 4}
A_t &= \sqrt{t} \sum_{i,j=1}^d \Sigma_{ij} \EE \Big[ \frac{\partial^2}{\partial x_j \partial x_i} g(\sqrt{1-t}\, \bF + \sqrt{t}\, \bN) \Big]\,.
\end{align}
Similarly as above, we deduce for the quantity $B_t$ that, for every $t \in (0,1)$,
\begin{align*}
B_t = \sum_{i=1}^d \EE_ {\bF} \Big[ \EE_ {\bN} \Big[ \frac{\partial}{\partial x_i} g(\sqrt{1-t}\, \bF + \sqrt{t}\, \bN) \Big] F_i \Big]\,.
\end{align*}
This time, defining the functions $h_{i,t} : \R^d \rightarrow \R$ by
\begin{align*}
h_{i,t}(\bm{x}) := \EE_ {\bN} \Big[ \frac{\partial}{\partial x_i} g(\sqrt{1-t}\, \bm{x} + \sqrt{t}\, \bN) \Big]\,,\qquad\bm{x}\in\RRd\,,
\end{align*}
and using the integration by parts formula in Lemma \ref{Multivariate approximate integration by parts}, we see that, for every $t \in (0,1)$,
\begin{align}\label{Multivariate main theorem proof B_t 1}
B_t = \sum_{i=1}^d \EE_ {\bF} \Big[ h_{i,t}(\bF) F_i \Big] &= \sum_{i,j=1}^d \EE_ {\bF} \Big[ \frac{\partial}{\partial x_j} h_{i,t}(\bF) \langle DF_j, -DL^{-1}F_i \rangle_{\ell^2(\NN)} \Big] + \sum_{i=1}^d \EE_ {\bF}[\langle R_{i,t}(\bF), -DL^{-1}F_i \rangle_{\ell^2(\NN)}] \notag\\
&= \sqrt{1-t} \sum_{i,j=1}^d \EE_ {\bF} \Big[ \EE_ {\bN} \Big[ \frac{\partial^2}{\partial x_j \partial x_i} g(\sqrt{1-t}\, \bF + \sqrt{t}\, \bN) \Big] \langle DF_j, -DL^{-1}F_i \rangle_{\ell^2(\NN)} \Big] \notag\\
&\qquad+ \sum_{i=1}^d \EE_ {\bF}[\langle R_{i,t}(\bF), -DL^{-1}F_i \rangle_{\ell^2(\NN)}]\,,
\end{align}
where we recall that the exchange of differentiation and expectation in the last step is valid since $\frac{\partial}{\partial x_i}g$ is bounded for every $i=1, \dots, d$, and $R_{i,t}(\bF)$ is a remainder which by \eqref{Multivariate approximate integration by parts equation 2} fulfils that for every $i=1, \dots, d$ and $t \in (0,1)$,
\begin{align}\label{Multivariate main theorem proof remainder}
\absolute{\EE[\langle R_{i,t}(\bF), -DL^{-1}F_i \rangle_{\ell(\NN)}]} 
&\leq \frac{1}{2} M_2(h_{i,t}) \EE \Big[ \Big\langle \frac{\absolute{p-q}}{\sqrt{pq}} \Big( \sum_{j=1}^d \absolute{DF_j} \Big)^2, \absolute{-DL^{-1}F_i} \Big\rangle_{\ell^2(\NN)} \Big] \notag\\
&\qquad + \frac{5}{12} M_3(h_{i,t}) \EE \Big[ \Big\langle \frac{1}{pq} \Big( \sum_{j=1}^d \absolute{DF_j} \Big)^3, \absolute{-DL^{-1}F_i} \Big\rangle_{\ell^2(\NN)} \Big] \notag\\
&\leq \frac{1}{2} (1-t)M_3(g) \EE \Big[ \Big\langle \frac{\absolute{p-q}}{\sqrt{pq}} \Big( \sum_{j=1}^d \absolute{DF_j} \Big)^2, \absolute{-DL^{-1}F_i} \Big\rangle_{\ell^2(\NN)} \Big] \notag\\
&\qquad + \frac{5}{12} (1-t)^{3/2} M_4(g) \EE \Big[ \Big\langle \frac{1}{pq} \Big( \sum_{j=1}^d \absolute{DF_j} \Big)^3, \absolute{-DL^{-1}F_i} \Big\rangle_{\ell^2(\NN)} \Big]\,.
\end{align}
Going back to \eqref{Multivariate main theorem proof B_t 1}, another application of the independence of $\bF$ and $\bN$ together with Fubini's theorem yields that for every $t \in (0,1)$,
\begin{align}\label{Multivariate main theorem proof B_t 2}
B_t &= \sqrt{1-t} \sum_{i,j=1}^d \EE_ {\bF} \Big[ \EE_ {\bN} \Big[ \frac{\partial^2}{\partial x_j \partial x_i} g(\sqrt{1-t}\, \bF + \sqrt{t}\, \bN) \langle DF_j, -DL^{-1}F_i \rangle_{\ell^2(\NN)} \Big] \Big] \notag\\
&\qquad + \sum_{i=1}^d \EE_ {\bF}[ \EE_ {\bN}[ \langle R_{i,t}(\bF), -DL^{-1}F_i \rangle_{\ell^2(\NN)}]] \notag \\
&= \sqrt{1-t} \sum_{i,j=1}^d \EE \Big[ \frac{\partial^2}{\partial x_j \partial x_i} g(\sqrt{1-t}\, \bF + \sqrt{t}\, \bN) \langle DF_j, -DL^{-1}F_i \rangle_{\ell^2(\NN)} \Big] \notag\\
&\qquad + \sum_{i=1}^d \EE[\langle R_{i,t}(\bF), -DL^{-1}F_i \rangle_{\ell^2(\NN)}]\,.
\end{align}
Hence, by combining \eqref{Multivariate main theorem proof A_t 4} and \eqref{Multivariate main theorem proof B_t 2} with \eqref{Multivariate main theorem proof equation 2} we deduce that for every $t \in (0,1)$,
\begin{align*}
\Psi'(t) &= \frac{1}{2} \sum_{i,j=1}^d \EE \Big[ \frac{\partial^2}{\partial x_j \partial x_i} g(\sqrt{1-t}\, \bF + \sqrt{t}\, \bN) (\Sigma_{ij} - \langle DF_j, -DL^{-1}F_i \rangle_{\ell^2(\NN)}) \Big] \notag\\
&\qquad - \frac{1}{2\sqrt{1-t}} \sum_{i=1}^d \EE[\langle R_{i,t}(\bF), -DL^{-1}F_i \rangle_{\ell^2(\NN)}]\,,
\end{align*}
and by using \eqref{Multivariate main theorem proof remainder} as well as the fact that $M_2(g), M_3(g), M_4(g) \leq 1$ we thus conclude that, for every $t \in (0,1)$,
\begin{align*}
\absolute{\Psi'(t)} &\leq \frac{1}{2} \sum_{i,j=1}^d \EE[\absolute{\Sigma_{ij} - \langle DF_j, -DL^{-1}F_i \rangle_{\ell^2(\NN)}}] \notag\\
&\phantom{{}={}} + \frac{1}{4}\sqrt{1-t} \EE \Big[ \Big\langle \frac{\absolute{p-q}}{\sqrt{pq}} \Big( \sum_{j=1}^d \absolute{DF_j} \Big)^2, \sum_{i=1}^d \absolute{-DL^{-1}F_i} \Big\rangle_{\ell^2(\NN)} \Big] \notag\\
&\phantom{{}\leq{}} + \frac{5}{24} (1-t) \EE \Big[ \Big\langle \frac{1}{pq} \Big( \sum_{j=1}^d \absolute{DF_j} \Big)^3, \sum_{i=1}^d \absolute{-DL^{-1}F_i} \Big\rangle_{\ell^2(\NN)} \Big].
\end{align*}
Plugging this into \eqref{Multivariate main theorem proof equation 1} where we take the supremum over all $t \in (0,1)$ completes the argument.
\end{proof}

\subsection{A multivariate discrete second-order Poincar\'e inequality}\label{sec:2ndOrderPoincare}

In this section we use Theorem \ref{thm:Multivariate main theorem} to develop a discrete second-order Poincar\'e inequality for the normal approximation of vectors of Rademacher functionals. In comparison with Theorem \ref{thm:Multivariate main theorem} it has the advantage that it expresses the bound for $d_4(\bF,\bN)$ only in terms of discrete first- and second-order Malliavin derivatives and does not involve the operator $L^{-1}$. This in turn allows to apply the bound without specifying the chaos decomposition of the component random variables of the random vector $\bF$. Our result can be seen as the natural multivariate extension of the main result from \cite{KRT2}, where a univariate discrete second-order Poincar\'e inequality has been obtained for the Kolmogorov distance (see also Remark 3.2 \cite{Zheng} for a closely related bound for the Wasserstein distance). 

\begin{theorem}\label{thm:SecondOrderPoincare}
Let the conditions of Theorem \ref{thm:Multivariate main theorem} prevail and assume additionally that $F_i\in\dom(D^2)$ for all $i=1,\ldots,d$. For $i,j=1,\ldots,d$ define
\begin{align*}
B_1(i,j) &:= \Big( \frac{15}{4} \sum_{k,\ell,m=1}^\infty (\EE[(D_kF_i)^2(D_\ell F_i)^2])^{1/2} (\EE[(D_mD_kF_j)^2(D_mD_\ell F_j)^2])^{1/2} \Big)^{1/2}\,,\\
B_2(i,j) &:= \Big( \frac{3}{4} \sum_{k,\ell,m=1}^\infty \frac{1}{p_mq_m} (\EE[(D_mD_kF_i)^2(D_mD_\ell F_i)^2])^{1/2} (\EE[(D_mD_kF_j)^2(D_mD_\ell F_j)^2])^{1/2}\Big)^{1/2}\,,\\
B_3(i,j) &:= \frac{1}{2} d^{3/2} \sum_{k=1}^\infty \frac{|p_k-q_k|}{\sqrt{p_kq_k}} (\EE[(D_kF_i)^2])^{1/2} (\EE[(D_kF_j)^4])^{1/2}\,,\\
B_4(i,j) &:= \frac{5}{12} d^2 \sum_{k=1}^\infty \frac{1}{p_kq_k} (\EE[(D_kF_i)^4])^{1/4} (\EE[(D_kF_j)^4])^{3/4}\,.
\end{align*}
Then,
\begin{align*}
d_4(\bF,\bN) \leq \frac{1}{2} \sum_{i,j=1}^d \big[ |\Sigma_{ij}-\cov(F_i,F_j)| + B_1(i,j) + B_2(i,j) + B_3(i,j) + B_4(i,j) \big]\,.
\end{align*}
\end{theorem}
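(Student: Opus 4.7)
The plan is to start from Theorem~\ref{thm:Multivariate main theorem}, which controls $d_4(\bF,\bN)$ by $A_1+A_2+A_3$, and show that each summand is dominated by the appropriate portion of the stated bound. Terms $A_2$ and $A_3$ will feed into $\tfrac12\sum_{i,j}B_3(i,j)$ and $\tfrac12\sum_{i,j}B_4(i,j)$, while $A_1$ absorbs both the covariance discrepancy $|\Sigma_{ij}-\cov(F_i,F_j)|$ and the $B_1,B_2$ contributions.

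For $A_2$, I will expand $\bigl(\sum_j|DF_j|\bigr)^2\leq d\sum_j(DF_j)^2$ by Cauchy--Schwarz, then use Cauchy--Schwarz in the expectation to split $(D_kF_j)^2\cdot|D_kL^{-1}F_i|$ into $(\EE[(D_kF_j)^4])^{1/2}(\EE[(D_kL^{-1}F_i)^2])^{1/2}$, and convert the last factor into $(\EE[(D_kF_i)^2])^{1/2}$ via the Mehler inequality~\eqref{eq:MehlerInequality} with $\alpha=2$; this reproduces $\tfrac12\sum_{i,j}B_3(i,j)$ after absorbing $d$ into $d^{3/2}$. The argument for $A_3$ is entirely analogous, using $\bigl(\sum_j|DF_j|\bigr)^3\leq d^2\sum_j|DF_j|^3$ and H\"older's inequality at exponents $(4/3,4)$ to split $(D_kF_j)^3\cdot|D_kL^{-1}F_i|$, which yields $\tfrac12\sum_{i,j}B_4(i,j)$.

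For $A_1$, introducing $H_{ij}:=\langle DF_j,-DL^{-1}F_i\rangle_{\ell^2(\NN)}$, the integration-by-parts formula~\eqref{Integration by parts formula} yields $\EE[H_{ij}]=\cov(F_i,F_j)$, so the triangle inequality bounds the $(i,j)$-summand of $A_1$ by $|\Sigma_{ij}-\cov(F_i,F_j)|+\sqrt{\Var(H_{ij})}$. The discrete Poincar\'e inequality~\eqref{eq:PoincareInequality} further controls $\Var(H_{ij})$ by $\EE[\|DH_{ij}\|_{\ell^2(\NN)}^2]$, and the product rule~\eqref{eq:ProductFormula} applied termwise inside the inner product gives the decomposition $D_mH_{ij}=T_1(m)+T_2(m)+T_3(m)$ with
\begin{align*}
T_1(m)&=\sum_{k}(D_mD_kF_j)(-D_kL^{-1}F_i),\qquad T_2(m)=\sum_{k}(D_kF_j)(-D_mD_kL^{-1}F_i),\\
T_3(m)&=-\tfrac{X_m}{\sqrt{p_mq_m}}\sum_{k}(D_mD_kF_j)(-D_mD_kL^{-1}F_i).
\end{align*}
Expanding each $T_\alpha(m)^2$ into a double sum over $(k,\ell)$, applying Cauchy--Schwarz in expectation to separate the $F_j$-factors from the $F_i$-factors, and converting squared products of $L^{-1}$-derivatives into squared products of ordinary derivatives via a product version of Mehler's inequality produces the cross-moment expressions appearing inside $B_1$ and $B_2$. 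The $T_1$- and $T_2$-contributions share a common structural form under the swap $i\leftrightarrow j$, so, once summed over $(i,j)$, they combine into $\sum_{i,j}B_1(i,j)$ by the symmetry $\sum_{i,j}B_1(i,j)=\sum_{i,j}B_1(j,i)$. The $T_3$-piece, whose $X_m^2/(p_mq_m)=1/(p_mq_m)$-factor produces the weight $1/(p_mq_m)$ in $B_2$, yields $\sum_{i,j}B_2(i,j)$; a Minkowski-type recombination of the three contributions then reproduces the constants $\tfrac{15}{4}$ and $\tfrac{3}{4}$.

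The principal technical obstacle is the product Mehler-type bound $\EE[(D_kL^{-1}F)^2(D_\ell L^{-1}F)^2]\leq\EE[(D_kF)^2(D_\ell F)^2]$ together with its second-order analogue for $D_mD_kL^{-1}F$ and $D_mD_\ell L^{-1}F$. The univariate estimate~\eqref{eq:MehlerInequality} only controls $\EE[(D_kL^{-1}F)^\alpha]$ by $\EE[(D_kF)^\alpha]$, and a naive two-fold application would instead produce a weaker bound in terms of a product of $L^4$-moments. To recover the joint cross moments appearing in $B_1$ and $B_2$, I expect to apply the integrated Mehler formula~\eqref{eq:IntegratedMehler} in both variables, combine Jensen's inequality on the probability measure $e^{-(s+t)}\,ds\,dt$ with Jensen's inequality applied to the conditional expectation $P_t(\,\cdot\,)=\EE[\,\cdot\,(X^t)\,|\,X]$, and then exploit the self-adjointness and $L^p$-contractivity of the discrete Ornstein--Uhlenbeck semigroup on $L^2(\Omega)$.
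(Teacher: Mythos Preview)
Your overall architecture matches the paper's: start from $A_1+A_2+A_3$, handle $A_2,A_3$ via Cauchy--Schwarz/H\"older plus \eqref{eq:MehlerInequality}, and for $A_1$ pass through the triangle inequality, the identity $\EE[H_{ij}]=\cov(F_i,F_j)$, the Poincar\'e inequality \eqref{eq:PoincareInequality}, and the three-term product-rule decomposition of $D_m H_{ij}$. That part is fine and is exactly what the paper does.

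The gap is in your handling of $T_1,T_2,T_3$. You propose to first expand $T_\alpha(m)^2$ into a double sum, then apply Cauchy--Schwarz in expectation to isolate factors such as $\EE[(D_kL^{-1}F_i)^2(D_\ell L^{-1}F_i)^2]$, and finally invoke a ``product Mehler inequality'' to replace this by $\EE[(D_kF_i)^2(D_\ell F_i)^2]$. But the route you sketch for that inequality --- apply \eqref{eq:IntegratedMehler} with two independent time variables $s,t$, use Jensen on $e^{-(s+t)}\,ds\,dt$ and on the conditional expectations, then invoke $L^p$-contractivity of $P_t$ --- does not produce the cross moment. Once you have two independent copies $X^s,X^t$, the best you can extract from contractivity and Cauchy--Schwarz is the weaker product $(\EE[(D_kF_i)^4])^{1/2}(\EE[(D_\ell F_i)^4])^{1/2}$, which by Cauchy--Schwarz dominates $\EE[(D_kF_i)^2(D_\ell F_i)^2]$ and therefore does not recover $B_1(i,j)$ as stated.

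The paper avoids this entirely by reversing the order of operations: it substitutes the integrated Mehler representation \emph{before} squaring the sum. Concretely, for $T_1$ one writes
\[
\Bigl(\sum_k |D_mD_kF_j|\,|D_kL^{-1}F_i|\Bigr)^2
\ \le\ \Bigl(\int_0^\infty e^{-t}\,\EE\Bigl[\sum_k|D_mD_kF_j|\,|D_kF_i(X^t)|\ \Big|\ X\Bigr]dt\Bigr)^2,
\]
and only then applies Jensen to the single integral in $t$ and to the single conditional expectation. When the square is expanded into a double sum over $(k,\ell)$, both factors $|D_kF_i(X^t)|$ and $|D_\ell F_i(X^t)|$ sit at the \emph{same} $X^t$; after Cauchy--Schwarz and taking the outer expectation, the distributional identity $X^t\stackrel{d}{=}X$ yields $\EE[(D_kF_i)^2(D_\ell F_i)^2]$ directly, with no product-Mehler lemma needed. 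The same device, using $\int_0^\infty e^{-2t}P_t\,dt$ from \eqref{eq:IntegratedMehler} with $m=2$, handles $T_2$ and $T_3$ and is also the source of the extra factor $\tfrac14$ there. So the fix is not to prove your product inequality but to insert Mehler one step earlier, before you square.
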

\begin{proof}
Let $A_1,A_2$ and $A_3$ be the three terms defined in Theorem \ref{thm:Multivariate main theorem}. We start with $A_1$. An application of the triangle inequality yields
\begin{align}\label{eq:SecondOrderPoincareEq1}
\EE[\absolute{\Sigma_{ij} - \langle DF_j, -DL^{-1}F_i \rangle_{\ell^2(\NN)}}] \leq \absolute{\Sigma_{ij} - \cov(F_i,F_j)} + \EE[\absolute{\cov(F_i,F_j) - \langle DF_j, -DL^{-1}F_i \rangle_{\ell^2(\NN)}}].
\end{align}
Let us further consider the second summand on the right hand side of \eqref{eq:SecondOrderPoincareEq1}. By the integration by parts formula in \eqref{Integration by parts formula} we see that
\begin{align*}
\cov(F_i,F_j) = \EE[\langle DF_j,-DL^{-1}F_i\rangle_{\ell^2(\NN)}],
\end{align*}
and thus, by the Cauchy-Schwarz inequality and the Poincar\'e inequality in \eqref{eq:PoincareInequality} we have that
\begin{align}\label{eq:SecondOrderPoincareEq2}
\EE[\absolute{\cov(F_i,F_j) - \langle DF_j, -DL^{-1}F_i \rangle_{\ell^2(\NN)}}] &\leq (\Var(\langle DF_j,-DL^{-1}F_i\rangle_{\ell^2(\NN)}))^{1/2}\notag\\
&\leq (\EE[\norm{D(\langle DF_j,-DL^{-1}F_i\rangle_{\ell^2(\NN)})}_{\ell^2(\NN)}^2])^{1/2}\notag\\
&= \Big( \EE\Big[\sum_{\ell=1}^\infty \Big(\sum_{k=1}^\infty D_\ell ((D_kF_j)(-D_kL^{-1}F_i))\Big)^2 \Big] \Big)^{1/2}.
\end{align}
By the product formula for the discrete Malliavin derivative in \eqref{eq:ProductFormula} and the triangle inequality we get that, for every $k,\ell \in \NN$,
\begin{align*}
&\absolute{D_\ell ((D_kF_j)(-D_kL^{-1}F_i))}\\
&\leq \absolute{(D_\ell D_kF_j)(-D_kL^{-1}F_i)} + \absolute{(D_kF_j)(-D_\ell D_kL^{-1}F_i)} + \frac{1}{\sqrt{p_\ell q_\ell}} \absolute{(D_\ell D_kF_j)(-D_\ell D_kL^{-1}F_i)}\,.
\end{align*}
Plugging this into \eqref{eq:SecondOrderPoincareEq2} and using the Cauchy-Schwarz inequality then yields
\begin{align}\label{eq:SecondOrderPoincareEq3}
\EE[\absolute{\cov(F_i,F_j) - \langle DF_j, -DL^{-1}F_i \rangle_{\ell^2(\NN)}}] \leq \big(3\big[T_1(i,j)+T_2(i,j)+T_3(i,j)\big]\big)^{1/2}
\end{align}
with
\begin{align*}
T_1(i,j) &:= \EE\Big[\sum_{\ell=1}^\infty \Big(\sum_{k=1}^\infty \absolute{(D_\ell D_kF_j)(-D_kL^{-1}F_i)}\Big)^2 \Big]\,,\\
T_2(i,j) &:= \EE\Big[\sum_{\ell=1}^\infty \Big(\sum_{k=1}^\infty \absolute{(D_kF_j)(-D_\ell D_kL^{-1}F_i)}\Big)^2 \Big]\,,\\
T_3(i,j) &:= \EE\Big[\sum_{\ell=1}^\infty \frac{1}{p_\ell q_\ell} \Big(\sum_{k=1}^\infty \absolute{(D_\ell D_kF_j)(-D_\ell D_kL^{-1}F_i)}\Big)^2 \Big]\,.
\end{align*}
Each of these quantities will now be further bounded from above. Considering $T_1$, an application of \eqref{eq:IntegratedMehler} and \eqref{eq:Mehler} as well as the triangle inequality yields that, for every $\ell \in \NN$,
\begin{align*}
\Big(\sum_{k=1}^\infty \absolute{(D_\ell D_kF_j)(-D_kL^{-1}F_i)}\Big)^2 &= \Big(\sum_{k=1}^\infty \Bigabsolute{(D_\ell D_kF_j) \int_0^\infty e^{-t}P_tD_kF_i \, dt} \Big)^2\\
&= \Big(\sum_{k=1}^\infty \Bigabsolute{(D_\ell D_kF_j) \int_0^\infty e^{-t}\EE[D_kF_i(X^t) \, \vert \, X] \, dt} \Big)^2\\
&\leq \Big(\sum_{k=1}^\infty \absolute{D_\ell D_kF_j} \int_0^\infty e^{-t}\EE[\absolute{D_kF_i(X^t)} \, \vert \, X] \, dt \Big)^2\,.
\end{align*}
Furthermore, by virtue of the monotone convergence theorem we get that, for every $\ell \in \NN$,
\begin{align*}
\Big(\sum_{k=1}^\infty \absolute{D_\ell D_kF_j} \int_0^\infty e^{-t}\EE[\absolute{D_kF_i(X^t)} \, \vert \, X] \, dt \Big)^2 &= \Big(\int_0^\infty e^{-t} \sum_{k=1}^\infty \absolute{D_\ell D_kF_j} \EE[\absolute{D_kF_i(X^t)} \, \vert \, X] \, dt \Big)^2\\
&= \Big(\int_0^\infty e^{-t} \EE\Big[ \sum_{k=1}^\infty \absolute{(D_\ell D_kF_j)(D_kF_i(X^t))} \, \Big\vert \, X\Big] \,  dt \Big)^2\,.
\end{align*}
By using Jensen's inequality as well as the Cauchy-Schwarz inequality we then conclude that, for every $\ell \in \NN$,
\begin{align*}
&\Big(\int_0^\infty e^{-t} \EE\Big[ \sum_{k=1}^\infty \absolute{(D_\ell D_kF_j)(D_kF_i(X^t))} \, \Big\vert \, X\Big] \, dt \Big)^2\\
&\leq \int_0^\infty e^{-t} \EE\Big[ \Big( \sum_{k=1}^\infty \absolute{(D_\ell D_kF_j)(D_kF_i(X^t))} \Big)^2 \, \Big\vert \, X\Big] \, dt\\
&= \int_0^\infty e^{-t} \EE\Big[ \sum_{m,k=1}^\infty \absolute{(D_\ell D_mF_j)(D_mF_i(X^t))(D_\ell D_kF_j)(D_kF_i(X^t))} \, \Big\vert \, X\Big] \, dt\\
&= \sum_{m,k=1}^\infty \absolute{(D_\ell D_mF_j)(D_\ell D_kF_j)} \int_0^\infty e^{-t} \EE[ \absolute{(D_mF_i(X^t))(D_kF_i(X^t))} \, \vert \, X] \, dt\\
&\leq \sum_{m,k=1}^\infty \absolute{(D_\ell D_mF_j)(D_\ell D_kF_j)} \int_0^\infty e^{-t} (\EE[ (D_mF_i(X^t))^2 (D_kF_i(X^t))^2 \, \vert \, X])^{1/2} \, dt\\
&\leq \sum_{m,k=1}^\infty \absolute{(D_\ell D_mF_j)(D_\ell D_kF_j)} \Big( \int_0^\infty e^{-t} \EE[ (D_mF_i(X^t))^2 (D_kF_i(X^t))^2 \, \vert \, X] \, dt \Big)^{1/2}\,.
\end{align*}
Thus, another application of the Cauchy-Schwarz inequality leads to the bound
\begin{align*}
T_1(i,j) &\leq \EE \Big[ \sum_{m,k,\ell=1}^\infty \absolute{(D_\ell D_mF_j)(D_\ell D_kF_j)} \Big( \int_0^\infty e^{-t} \EE[ (D_mF_i(X^t))^2 (D_kF_i(X^t))^2 \, \vert \, X] \, dt \Big)^{1/2} \Big] \notag\\
&\leq \sum_{m,k,\ell=1}^\infty (\EE[(D_\ell D_mF_j)^2(D_\ell D_kF_j)^2])^{1/2} \Big( \EE \Big[ \int_0^\infty e^{-t} \EE[ (D_mF_i(X^t))^2 (D_kF_i(X^t))^2 \, \vert \, X] \, dt \Big] \Big)^{1/2} \notag\\
&= \sum_{m,k,\ell=1}^\infty (\EE[(D_\ell D_mF_j)^2(D_\ell D_kF_j)^2])^{1/2} \Big( \int_0^\infty e^{-t} \EE[(D_mF_i)^2(D_kF_i)^2] \, dt \Big)^{1/2} \notag\\
&= \sum_{m,k,\ell=1}^\infty (\EE[(D_\ell D_mF_j)^2(D_\ell D_kF_j)^2])^{1/2}(\EE[(D_mF_i)^2(D_kF_i)^2])^{1/2}\,.
\end{align*}
The quantities $T_2$ and $T_3$ can be treated in the same manner as $T_1$, and thus, it holds that
\begin{align*}
T_2(i,j) &\leq \frac{1}{4} \sum_{m,k,\ell=1}^\infty (\EE[(D_\ell D_mF_i)^2(D_\ell D_kF_i)^2])^{1/2}(\EE[(D_mF_j)^2(D_kF_j)^2])^{1/2},\\
T_3(i,j) &\leq \frac{1}{4} \sum_{m,k,\ell=1}^\infty \frac{1}{p_\ell q_\ell} (\EE[(D_\ell D_mF_i)^2(D_\ell D_kF_i)^2])^{1/2}(\EE[(D_\ell D_mF_j)^2(D_\ell D_kF_j)^2])^{1/2}\,.
\end{align*}
Therefore, combining the bounds for $T_1, T_2$ and $T_3$ with \eqref{eq:SecondOrderPoincareEq3} and \eqref{eq:SecondOrderPoincareEq1} yields
\begin{align*}
A_1 &\leq \frac{1}{2} \sum_{i,j=1}^d \big[ |\Sigma_{ij}-\cov(F_i,F_j)| + B_1(i,j) + B_2(i,j)\big]\,.
\end{align*}

Turning to the term $A_2$, by several applications of the Cauchy-Schwarz inequality and due to \eqref{eq:MehlerInequality} we see that
\begin{align*}
A_2 &= \frac{1}{4} \sum_{k=1}^\infty \frac{\absolute{p_k-q_k}}{\sqrt{p_kq_k}} \sum_{i=1}^d \EE \Big[ \Big( \sum_{j=1}^d \absolute{D_kF_j} \Big)^2 \absolute{-D_kL^{-1}F_i} \Big]\\
&\leq \frac{1}{4} \sum_{k=1}^\infty \frac{\absolute{p_k-q_k}}{\sqrt{p_kq_k}} \Big( \EE \Big[ \Big( \sum_{j=1}^d \absolute{D_kF_j} \Big)^4 \Big] \Big)^{1/2} \sum_{i=1}^d (\EE[(D_kF_i)^2])^{1/2}\\
&\leq \frac{1}{4}d^{3/2} \sum_{i,j=1}^d \sum_{k=1}^\infty \frac{\absolute{p_k-q_k}}{\sqrt{p_kq_k}} (\EE[(D_kF_j)^4])^{1/2} (\EE[(D_kF_i)^2])^{1/2}\,.
\end{align*}

For the third and last term $A_3$ we similarly see that, by using H\"older's inequality with H\"older conjugates $4$ and $4/3$ as well as \eqref{eq:MehlerInequality},
\begin{align*}
A_3 &= \frac{5}{24} \sum_{k=1}^\infty \frac{1}{p_kq_k} \sum_{i=1}^d \EE \Big[ \Big( \sum_{j=1}^d \absolute{D_kF_j} \Big)^3 \absolute{-D_kL^{-1}F_i} \Big]\\
&\leq \frac{5}{24}d^2 \sum_{i,j=1}^d \sum_{k=1}^\infty \frac{1}{p_kq_k} \EE[\absolute{D_kF_j}^3 \cdot \absolute{-D_kL^{-1}F_i}]\\
&\leq \frac{5}{24}d^2 \sum_{i,j=1}^d \sum_{k=1}^\infty \frac{1}{p_kq_k} (\EE[(D_kF_j)^4])^{3/4} (\EE[(D_kL^{-1}F_i)^4])^{1/4}\\
&\leq \frac{5}{24}d^2 \sum_{i,j=1}^d \sum_{k=1}^\infty \frac{1}{p_kq_k} (\EE[(D_kF_j)^4])^{3/4} (\EE[(D_kF_i)^4])^{1/4}\,.
\end{align*}
This completes the argument.
\end{proof}

\section{Applications to random graphs and random cubical complexes}\label{sec:Application}

\subsection{Subgraph and degree counts in the Erd\H{o}s-R\'enyi random graph}

In this section we consider an application of the discrete second-order Poincar\'e inequality developed above to subgraph counts in the Erd\H{o}s-R\'enyi random graph. To describe the model, let $K_n$ be the complete graph on $n\in\NN$ vertices and fix $p\in(0,1)$. In what follows we implicitly assume that $n$ is sufficiently large. We number the $n\choose 2$ edges of $K_n$ in a fixed but arbitrary way and denote them by $e_1,\ldots,e_{n\choose 2}$. Now, to each edge $e_k$ of $K_n$ a Rademacher random variable $X_k$ with success probability $p$ is assigned and we remove $e_k$ from $K_n$ if $X_k=-1$ and keep $e_k$ otherwise. This gives rise to the Erd\H{o}s-R\'enyi random graph denoted by $\cG(n,p)$, which has $n$ vertices and a binomially distributed number of edges with parameters ${n\choose 2}$ and $p$. In what follows we assume $p$ to be independent of $n$.

Let $\G$ be a fixed (finite, simple) graph and denote by $X_\G$ the number of subgraphs of $\cG(n,p)$ that are isomorphic to $\G$ (we assume here that all graphs we consider have at least one edge). To represent this counting statistic formally, we denote by $v_\G$ the number of vertices and by $e_\G$ the number of edges of $\G$. Moreover, we shall denote by $\aut(\G)$ the (finite) group of graph-automorphisms of $\G$ and by $|\aut(\G)|$ its cardinality. Using this notation, $X_\G$ may be written as 
\begin{equation}\label{eq:XGamma}
X_\G = \sum_{\G'}{\bf 1}\{\G'\subset\cG(n,p)\}\,,
\end{equation}
where the sum is running over all $\binom{n}{v_\G}{v_\G!\over|\aut(\G)|}$ copies $\G'$ of $\G$ in $K_n$ and where ${\bf 1}\{\G'\subset\cG(n,p)\}$ is the indicator function of the event that $\G'$ is a subgraph of $\cG(n,p)$. Since $\EE[{\bf 1}\{\G'\subset\cG(n,p)\}]=P(\G'\subset\cG(n,p))=p^{e_\G}$, it readily follows that
$$
\EE[X_\G] = {n\choose v_\G}{v_\G!\over|\aut(\G)|}p^{e_\G}\,.
$$
To proceed, we also need information about the covariance between $X_\G$ and $X_\Phi$ for two graphs $\G$ and $\Phi$. Before we state the result, let us introduce our asymptotic notation. We shall write $a_n=O(b_n)$ for two sequences $(a_n)_{n\in\NN}$ and $(b_n)_{n\in\NN}$ if $\limsup_{n\to\infty}|a_n/b_n|<\infty$. Moreover, $a_n\asymp b_n$ will indicate that $|a_n/b_n|\to 1$, as $n\to\infty
$.

\begin{lemma}\label{lem:CovarianceERG}
Let $\G$ and $\Phi$ be two graphs and define $X_\G$ and $X_\Phi$ as above. Then,
\begin{align*}
\cov(X_\G,X_\Phi) &\asymp 2\,{n^{v_\G+v_\Phi-2}\over|\aut(\G)||\aut(\Phi)|}\,e_\G e_\Phi\,p^{e_\G+e_\Phi-1}(1-p)\\
&\qquad+c(\G,\Phi)\,n^{v_\G+v_\Phi-3}\,p^{e_\G+e_\Phi-2}(1-p)+O(n^{v_\G+v_\Phi-3})
\end{align*}
with a constant $c(\G,\Phi)>0$ only depending on $\G$ and $\Phi$.
\end{lemma}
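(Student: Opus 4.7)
The plan is to write $X_\G$ and $X_\Phi$ as sums of indicators as in \eqref{eq:XGamma}, expand the covariance, and stratify by the number of edges shared by two copies. Since $\mathbf{1}\{\G'\subset\cG(n,p)\}$ depends only on the $e_{\G'}$ random variables corresponding to the edges of $\G'$, the covariance of $\mathbf{1}\{\G'\subset\cG(n,p)\}$ and $\mathbf{1}\{\Phi'\subset\cG(n,p)\}$ vanishes when $\G'$ and $\Phi'$ share no edge, and equals $p^{e_\G+e_\Phi-i}(1-p^i)$ when they share exactly $i$ edges. Writing $N_i$ for the number of ordered pairs $(\G',\Phi')$ of copies of $\G,\Phi$ in $K_n$ sharing exactly $i$ edges, this gives
\begin{equation*}
\cov(X_\G,X_\Phi) = \sum_{i\geq 1} p^{e_\G+e_\Phi-i}(1-p^i)\,N_i.
\end{equation*}

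The second step is a rough estimate on $N_i$: any graph with $i\geq 2$ edges has at least three vertices, so two copies that share $i\geq 2$ edges necessarily share at least three vertices, and therefore $N_i = O(n^{v_\G+v_\Phi-3})$ for all $i\geq 2$. The leading asymptotics of $N_1$ is then obtained by double counting. Summing the indicator ``$\G'\supset e$ and $\Phi'\supset e$'' over $e\in E(K_n)$ counts each pair $(\G',\Phi')$ with multiplicity equal to the number of its shared edges, whence
\begin{equation*}
\sum_{e\in E(K_n)} (\#\text{copies of }\G\text{ through }e)\cdot(\#\text{copies of }\Phi\text{ through }e) \;=\; \sum_{i\geq 1} iN_i \;=\; N_1+O(n^{v_\G+v_\Phi-3}).
\end{equation*}
The number of copies of $\G$ through a fixed edge equals $2e_\G(n-2)(n-3)\cdots(n-v_\G+1)/|\aut(\G)|$ (pick which edge of $\G$ maps onto the fixed one, embed the remaining $v_\G-2$ vertices, divide out automorphisms), and similarly for $\Phi$. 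Multiplying by $\binom{n}{2}$ and expanding the falling factorials in powers of $n$ yields
\begin{equation*}
N_1 \;=\; \frac{2 e_\G e_\Phi\, n^{v_\G+v_\Phi-2}}{|\aut(\G)||\aut(\Phi)|} \;+\; O(n^{v_\G+v_\Phi-3}),
\end{equation*}
which, multiplied by $p^{e_\G+e_\Phi-1}(1-p)$, produces the first term of the lemma. The $i=2$ contribution factors as $p^{e_\G+e_\Phi-2}(1-p)(1+p)N_2$; absorbing $(1+p)$ and the combinatorial leading coefficient of $N_2$ into a positive constant $c(\G,\Phi)$ gives the stated $n^{v_\G+v_\Phi-3}$ correction, while contributions from $i\geq 3$ and the subleading parts of $N_1$ and $N_2$ are all $O(n^{v_\G+v_\Phi-3})$ and collected into the error term.

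The routine part is the $i=1$ term, where double counting bypasses the need to enumerate pairs directly. The most delicate step is identifying the constant $c(\G,\Phi)$: one must isolate those pairs sharing exactly two edges that attain the maximal order $n^{v_\G+v_\Phi-3}$, namely configurations where the two shared edges meet at a common vertex (so exactly three vertices are shared), and verify that the resulting combinatorial count is strictly positive for every choice of $\G$ and $\Phi$ with at least one edge. Keeping track of the $(1+p)$ factor coming from $1-p^2$ versus the $(1-p)$ factor displayed in the statement is the main bookkeeping point, but it causes no difficulty as it is harmlessly absorbed into $c(\G,\Phi)$.
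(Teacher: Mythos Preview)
Your argument is correct and follows essentially the same strategy as the paper: expand the covariance over pairs of copies, stratify by the number $i$ of shared edges, identify the $i=1$ stratum as dominant, and bound the rest by $O(n^{v_\G+v_\Phi-3})$. The paper counts the $i=1$ contribution directly (choose $\G'$, then choose $\Phi'$ sharing one edge), whereas you obtain $N_1$ via the double-counting identity $\sum_e(\#\G'\ni e)(\#\Phi'\ni e)=\sum_i iN_i$ and then peel off the $i\ge 2$ part; this is a mild reorganisation rather than a different method, though your version makes the overcounting explicit and is arguably cleaner. You are also more careful than the paper about the bookkeeping for the $i=2$ term: you correctly flag that $(1+p)$ from $1-p^2=(1-p)(1+p)$ is being absorbed into $c(\G,\Phi)$, and that the positivity of $c(\G,\Phi)$ hinges on the existence of configurations where the two shared edges meet at a vertex---a point the paper glosses over (and which in fact fails when one of the graphs is a single edge, though this does not affect the application since the middle term is already $O(n^{v_\G+v_\Phi-3})$).
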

\begin{proof}
Recalling \eqref{eq:XGamma}, we see that
$$
\cov(X_\G,X_\Phi) = \sum_{\G',\Phi'}\cov({\bf 1}\{\G'\subset\cG(n,p)\},{\bf 1}\{\Phi'\subset\cG(n,p)\})\,.
$$
By the independence properties of the construction of $\cG(n,p)$ we have that $\cov({\bf 1}\{\G'\subset\cG(n,p)\},{\bf 1}\{\Phi'\subset\cG(n,p)\})\neq 0$ if and only if $\G'$ and $\Phi'$ have at least one common edge. In what follows we shall write $e_{\G',\Phi'}$ for the number of edges that $\G'$ and $\Phi'$ have in common. Thus,
\begin{align*}
\cov(X_\G,X_\Phi) &= \sum_{\G',\Phi':e_{\G',\Phi'}\geq 1}\cov({\bf 1}\{\G'\subset\cG(n,p)\},{\bf 1}\{\Phi'\subset\cG(n,p)\})\\
&=\sum_{\G',\Phi':e_{\G',\Phi'}\geq 1}\big(\EE[{\bf 1}\{\G'\subset\cG(n,p),\Phi'\subset\cG(n,p)\}]-\EE[{\bf 1}\{\G'\subset\cG(n,p)\}]\EE[{\bf 1}\{\Phi'\subset\cG(n,p)\}]\big)\\
&=\sum_{\G',\Phi':e_{\G',\Phi'}\geq 1}\big(p^{e_\G+e_\Phi-e_{\G',\Phi'}}-p^{e_\G}p^{e_\Phi}\big)\\
&=\sum_{\G',\Phi':e_{\G',\Phi'}\geq 1}p^{e_\G+e_\Phi-e_{\G',\Phi'}}(1-p^{e_{\G',\Phi'}})\\
&=\sum_{i=1}^{\min(e_\G,e_\Phi)}\sum_{\G'}\;\;\sum_{\Phi':e_{\G',\Phi'}=i}p^{e_\G+e_\Phi-i}(1-p^i)\,.
\end{align*}
Now, we notice that the second sum is running over $\binom{n}{v_\G}{v_\G!\over|\aut(\G)|}\asymp{n^{v_\G}\over|\aut(\G)|}$ terms. By choosing $i=1$ in the first sum (a choice that leads to the asymptotically dominating term), we see that the third sum is running over $\asymp{n^{v_\Phi-2}\over|\aut(\Phi)|}$ terms, since $\G'$ and $\Phi'$ have precisely one edge in common and there are ${n-v_\G\choose v_\Phi-2}\asymp n^{v_\Phi-2}$ possible choices for the $e_\Phi-1$ missing vertices to build a copy $\Phi'$ of $\Phi$ in $\cG(n,p)$. Moreover, taking into account all possible choices and orientations for this common edge gives rise to another factor $2e_\G e_\Phi$. Summarizing, the term with $i=1$ yields the asymptotic contribution 
$$
2\,{n^{v_\G+v_\Phi-2}\over|\aut(\G)||\aut(\Phi)|}\,e_\G e_\Phi\,p^{e_\G+e_\Phi-1}(1-p)\,.
$$
Choosing $i=2$ we see that there are two possible situations. Namely, the two common edges of $\G'$ and $\Phi'$ can or cannot have a common vertex. In the first situation and by the same reasoning as above, the asymptotic contribution is $\asymp c_1(\G,\Phi)n^{v_\G+v_\Phi-3}p^{e_\G+e_\Phi-2}(1-p^2)$, while in the second case we have the asymptotic contribution $\asymp c_2(\G,\Phi)n^{v_\G+v_\Phi-4}p^{e_\G+e_\Phi-2}(1-p^2)$ with constants $c_1(\G,\Phi),c_2(\G,\Phi)>0$ only depending on $\G$ and $\Phi$. Moreover, it is clear from this discussion that for all $i\geq 3$ the corresponding terms in the above sum are of order $O(n^{v_\G+v_\Phi-3})$. This proves the claim.
\end{proof}

Now, let us turn to the multivariate central limit theorem for the subgraph counting statistics $X_\G$. For this, fix some $d\in\NN$ and let $\G_1,\ldots,\G_d$ be $d$ fixed (finite, simple) graphs with associated counting statistics $X_{\G_1},\ldots,X_{\G_d}$. For $i\in\{1,\ldots,d\}$ define the normalized random variables $\widetilde{X}_{\G_i}:=n^{1-v_{\G_i}}(X_{\G_i}-\EE[X_{\G_i}])$ and the random vector $\bX_{\bfG}:=(\widetilde{X}_{\G_1},\ldots,\widetilde{X}_{\G_d})$. Our next result is the announced multivariate central limit theorem for $\bX_\bfG$, which adds a rate of convergence to the related result in the paper of Janson and Nowicki \cite{JansonNowicki}.

\begin{theorem}\label{thm:CLTRandomGraphs}
Let $\Sig=(\Sig_{ij})_{i,j=1}^d$ be the matrix given by
$$
\Sig_{ij}:=\sigma_i\sigma_j\qquad\text{with}\qquad\sigma_i:=\sqrt{2p(1-p)}{e_{\G_i}\over|\aut(\G_i)|}p^{e_{\G_i}-1}\,,\qquad i\in\{1,\ldots,d\}
$$
and let $\bN_\Sig$ denote a $d$-dimensional centred Gaussian vector with covariance matrix $\Sig$.
Then, there exists a constant $c:=c(\G_1,\ldots,\G_d,p)>0$ only depending on $\G_1,\ldots,\G_d$ and on $p$ such that
$$
d_4(\bX_\bfG,\bN_\Sig) \leq {c\over n}
$$
for all sufficiently large $n$.
\end{theorem}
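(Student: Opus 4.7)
The plan is to invoke the discrete multivariate second-order Poincar\'e inequality from Theorem~\ref{thm:SecondOrderPoincare} with $F_i := \widetilde{X}_{\G_i}$, based on the Rademacher sequence with constant success probability $p$ indexed by the edges $e_1, \dotsc, e_{\binom{n}{2}}$ of $K_n$. It then suffices to verify the covariance approximation at rate $O(n^{-1})$ and to bound each of the four quantities $B_1(i,j), \dotsc, B_4(i,j)$ at the same rate.

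The covariance approximation is a direct consequence of Lemma~\ref{lem:CovarianceERG}: multiplying by $n^{2-v_{\G_i}-v_{\G_j}}$, the leading term matches $\sigma_i\sigma_j = \Sigma_{ij}$ exactly, and the remainder becomes $O(n^{-1})$.

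The crucial technical step will be to establish pointwise bounds for the discrete Malliavin derivatives. Observe that $(X_{\G_i})_k^+ - (X_{\G_i})_k^-$ equals the (random but bounded) number of copies of $\G_i$ in $\cG(n,p) \cup \{e_k\}$ containing $e_k$, which is dominated by the deterministic number of copies of $\G_i$ in $K_n$ through $e_k$. The latter is $O(n^{v_{\G_i}-2})$, so
$$
\|D_k F_i\|_\infty \leq C\, n^{-1}
$$
for a constant $C = C(\G_i, p)$. Similarly, since $D_k F_i$ does not depend on $X_k$, we have $D_k D_k F_i = 0$, and for $m \neq k$ the number of copies of $\G_i$ passing through both $e_k$ and $e_m$ is $O(n^{v_{\G_i}-3})$ if $e_k, e_m$ share a vertex and $O(n^{v_{\G_i}-4})$ otherwise, which yields
$$
\|D_m D_k F_i\|_\infty \leq \begin{cases} C\, n^{-2} & \text{if $e_m$ and $e_k$ share a vertex,} \\ C\, n^{-3} & \text{otherwise.} \end{cases}
$$

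With these pointwise bounds in hand, all four quantities $B_r(i,j)$ can be estimated by replacing the $L^2$- and $L^4$-norms by the corresponding $L^\infty$-norms. For $B_1(i,j)^2$, the triple sum over $(k,\ell,m)$ splits according to whether $e_m$ shares a vertex with both of $e_k, e_\ell$, with exactly one of them, or with neither. A careful count gives $O(n^4), O(n^5), O(n^6)$ admissible triples, respectively, while the corresponding decays of the integrand are $n^{-2}\cdot n^{-4}$, $n^{-2}\cdot n^{-5}$, $n^{-2}\cdot n^{-6}$; each case contributes $O(n^{-2})$, so $B_1(i,j) = O(n^{-1})$. An analogous but more favourable case analysis yields $B_2(i,j) = O(n^{-2})$. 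Finally, using $(\EE[(D_k F)^4])^{1/4} \leq \|D_k F\|_\infty \leq C n^{-1}$ and the fact that only $O(n^2)$ edges contribute, I get $B_3(i,j) = O(n^{-1})$ and $B_4(i,j) = O(n^{-2})$. Plugging everything into Theorem~\ref{thm:SecondOrderPoincare} produces the rate $O(n^{-1})$.

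The main obstacle will be the combinatorial bookkeeping for $B_1$: one must check that the three scenarios (no shared vertex, one shared vertex, two shared vertices among $e_k, e_\ell, e_m$) each contribute the same order $O(n^{-2})$, because the decay of the Malliavin derivatives exactly compensates the growth in the number of admissible edge triples. Once this balancing is verified, every other bound follows by routine applications of the deterministic $\|\cdot\|_\infty$ estimates and summation over the $\binom{n}{2}$ edges.
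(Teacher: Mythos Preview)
Your proposal is correct and follows essentially the same route as the paper: apply Theorem~\ref{thm:SecondOrderPoincare}, use Lemma~\ref{lem:CovarianceERG} for the covariance term, establish the pointwise bounds $D_kF_i=O(n^{-1})$ and the tiered second-order bounds depending on $|e_k\cap e_m|$, and then do the edge-configuration case analysis for $B_1,\ldots,B_4$. Your observation that $D_kD_kF_i=0$ lets you collapse three of the paper's six cases, and your claim $B_2(i,j)=O(n^{-2})$ is in fact sharper than (and consistent with) the paper's stated $B_2(i,j)^2=O(n^{-2})$, but neither refinement changes the overall argument or the final rate.
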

\begin{proof}
It readily follows from Lemma \ref{lem:CovarianceERG} and the definition of the constants $\sigma_i$ in the statement of the theorem that, for all $i,j=1,\ldots,d$, $\cov(\widetilde{X}_{\G_i},\widetilde{X}_{\G_j}) = \Sig_{ij}+O(n^{-1})$ and hence $|\cov(\widetilde{X}_{\G_i},\widetilde{X}_{\G_j})-\Sig_{ij}|=O(n^{-1})$.

To evaluate the other terms in the bound provided by the discrete second-order Poincar\'e inequality in Theorem \ref{thm:SecondOrderPoincare}, we first consider for each $i=1,\ldots,d$ and $k,\ell=1,\ldots,{n\choose 2}$ the first-and second-order discrete Malliavin derivatives $D_k\widetilde{X}_{\G_i}$ and $D_kD_\ell\widetilde{X}_{\G_i}$. From the very definition it follows that
$$
D_k\widetilde{X}_{\G_i} = {\sqrt{p(1-p)}\over n^{v_{\G_i}-1}}((X_{\G_i})_k^+-(X_{\G_i})_k^-)
$$
and the difference $(X_{\G_i})_k^+-(X_{\G_i})_k^-$ is just the number of copies of $\G_i$ that contain edge $e_k$. Since there are $O(n^{v_{\G_i}-2})$ possible choices for the $v_{\G_i}-2$ missing vertices to build such a copy, it follows that $(X_{\G_i})_k^+-(X_{\G_i})_k^-=O(n^{v_{\G_i}-2})$ and thus
\begin{align}\label{eq:ERG1stOrder}
D_k\widetilde{X}_{\G_i}=O(n^{-1})\,.
\end{align}
For the same reason we conclude that
\begin{align}\label{eq:ERG2ndOrder}
D_kD_\ell\widetilde{X}_{\G_i} = \begin{cases}
O(n^{-1}) &: |e_k\cap e_\ell| = 2\,,\\
O(n^{-2}) &: |e_k\cap e_\ell| = 1\,,\\
O(n^{-3}) &: |e_k\cap e_\ell| = 0\,,
\end{cases}
\end{align}
where $|e_k\cap e_\ell|$ denotes the number of vertices that $e_k$ and $e_\ell$ have in common.

We can now start to bound, for each $i,j=1,\ldots,d$, the term $B_1(i,j)$. Using the Cauchy-Schwarz inequality, it first follows that
$$
B_1(i,j)^2 = \frac{15}{4} \sum_{k,\ell,m=1}^{n \choose 2} (\EE[(D_kX_{\Gamma_i})^4]\EE[(D_\ell X_{\Gamma_i})^4])^{1/4} (\EE[(D_mD_kX_{\Gamma_j})^4]\EE[(D_mD_\ell X_{\Gamma_j})^4])^{1/4} \,.
$$

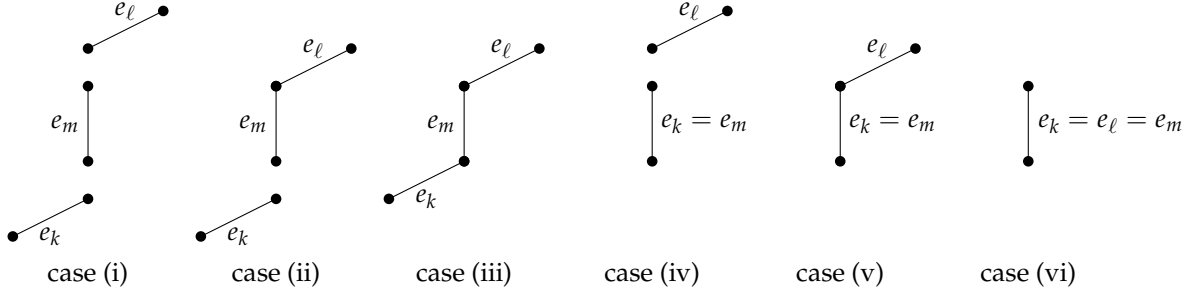
\begin{figure}[t]
\begin{center}
\begin{tikzpicture}
\coordinate (A) at (0,0);
\coordinate (B) at (1,0.5);
\coordinate (C) at (1,1);
\coordinate (D) at (1,2);
\coordinate (E) at (1,2.5);
\coordinate (F) at (2,3);

\fill (A) circle (2pt);
\fill (B) circle (2pt);
\fill (C) circle (2pt);
\fill (D) circle (2pt);
\fill (E) circle (2pt);
\fill (F) circle (2pt);
\draw (A) -- (B);
\draw (C) -- (D);
\draw (E) -- (F);
\node at (0.5,0) {$e_k$};
\node at (0.7,1.5) {$e_m$};
\node at (1.5,3) {$e_\ell$};
\node at (1,-0.5) {case (i)};

\coordinate (A) at (0+2.5,0);
\coordinate (B) at (1+2.5,0.5);
\coordinate (C) at (1+2.5,1);
\coordinate (D) at (1+2.5,2);
\coordinate (E) at (1+2.5,2);
\coordinate (F) at (2+2.5,2.5);

\fill (A) circle (2pt);
\fill (B) circle (2pt);
\fill (C) circle (2pt);
\fill (D) circle (2pt);
\fill (E) circle (2pt);
\fill (F) circle (2pt);
\draw (A) -- (B);
\draw (C) -- (D);
\draw (E) -- (F);
\node at (0.5+2.5,0) {$e_k$};
\node at (0.7+2.5,1.5) {$e_m$};
\node at (1.5+2.5,2.5) {$e_\ell$};
\node at (1+2.5,-0.5) {case (ii)};

\coordinate (A) at (0+5,0.5);
\coordinate (B) at (1+5,1);
\coordinate (C) at (1+5,1);
\coordinate (D) at (1+5,2);
\coordinate (E) at (1+5,2);
\coordinate (F) at (2+5,2.5);

\fill (A) circle (2pt);
\fill (B) circle (2pt);
\fill (C) circle (2pt);
\fill (D) circle (2pt);
\fill (E) circle (2pt);
\fill (F) circle (2pt);
\draw (A) -- (B);
\draw (C) -- (D);
\draw (E) -- (F);
\node at (0.5+5,0.5) {$e_k$};
\node at (0.7+5,1.5) {$e_m$};
\node at (1.5+5,2.5) {$e_\ell$};
\node at (1+5,-0.5) {case (iii)};

\coordinate (C) at (1+7.5,1);
\coordinate (D) at (1+7.5,2);
\coordinate (E) at (1+7.5,2.5);
\coordinate (F) at (2+7.5,3);

\fill (C) circle (2pt);
\fill (D) circle (2pt);
\fill (E) circle (2pt);
\fill (F) circle (2pt);
\draw (C) -- (D);
\draw (E) -- (F);
\node at (1.7+7.5,1.5) {$e_k=e_m$};
\node at (1.5+7.5,3) {$e_\ell$};
\node at (1+7.5,-0.5) {case (iv)};

\coordinate (C) at (1+10,1);
\coordinate (D) at (1+10,2);
\coordinate (E) at (1+10,2);
\coordinate (F) at (2+10,2.5);

\fill (C) circle (2pt);
\fill (D) circle (2pt);
\fill (E) circle (2pt);
\fill (F) circle (2pt);
\draw (C) -- (D);
\draw (E) -- (F);
\node at (1.7+10,1.5) {$e_k=e_m$};
\node at (1.5+10,2.5) {$e_\ell$};
\node at (1+10,-0.5) {case (v)};

\coordinate (C) at (1+12.5,1);
\coordinate (D) at (1+12.5,2);

\fill (C) circle (2pt);
\fill (D) circle (2pt);
\draw (C) -- (D);
\node at (2.1+12.5,1.5) {$e_k=e_\ell=e_m$};
\node at (1+12.5,-0.5) {case (vi)};

\end{tikzpicture}
\end{center}
\caption{The different cases arising in the proof of Theorem \ref{thm:CLTRandomGraphs}. We distinguish according to the number of vertices that edges $e_m$ has in common with $e_k$ and $e_\ell$, respectively. For example, the illustration in case (ii) means that $|e_k\cap e_m|=0$ and $|e_\ell\cap e_m|=1$ or, vice versa, $|e_k\cap e_m|=1$ and $|e_\ell\cap e_m|=0$. This allows both situations, $|e_k\cap e_\ell|=0$ or $|e_k\cap e_\ell|=1$ with $|e_k\cap e_\ell\cap e_m|=0$.}
\label{fig:Illu}
\end{figure}

Now, we have to distinguish different cases that are illustrated in Figure \ref{fig:Illu} (up to permutation of the indices $k,\ell$ and $m$). In case (i), we have $O({n\choose 2}) = O(n^2)$ possibilities to choose each of the three edges and by \eqref{eq:ERG1stOrder} each first-order discrete Malliavin derivative contributes $O(n^{-1})$, while each second-order derivatives contribute $O(n^{-3})$ according to \eqref{eq:ERG2ndOrder}. Thus, in case (i) the sum is of order $O(n^{6}\cdot n^{-2}\cdot n^{-6})=O(n^{-2})$. In case (ii), we have $O({n\choose 2}) = O(n^2)$ possibilities to choose each of the edges $e_k$ and $e_m$, while there are only $O(n)$ possibilities for $e_\ell$. Moreover, in view of \eqref{eq:ERG1stOrder} and \eqref{eq:ERG2ndOrder} the first-order discrete Malliavin derivatives contribute again $O(n^{-2})$, but the second-order derivatives contribute only $O(n^{-5})$. Thus, the terms corresponding to case (ii) in the above sum are of order $O(n^{5}\cdot n^{-2}\cdot n^{-5})=O(n^{-2})$. The same behaviour is also valid for cases (iii), (iv), (v) and (vi), which shows that $B_1(i,j)^2=O(n^{-2})$. Similarly we see that $B_2(i,j)^2=O(n^{-2})$.

We are thus left with the terms $B_3(i,j)$ and $B_4(i,j)$ given by
\begin{align*}
B_3(i,j) &:= \frac{1}{2} d^{3/2} \sum_{k=1}^{n \choose 2} \frac{|p_k-q_k|}{\sqrt{p_kq_k}} (\EE[(D_kX_{\Gamma_i})^2])^{1/2} (\EE[(D_kX_{\Gamma_j})^4])^{1/2}\,,\\
B_4(i,j) &:= \frac{5}{12} d^2 \sum_{k=1}^{n \choose 2} \frac{1}{p_kq_k} (\EE[(D_kX_{\Gamma_i})^4])^{1/4} (\EE[(D_kX_{\Gamma_j})^4])^{3/4}\,.
\end{align*}
In $B_3(i,j)$ there are ${n\choose 2}\asymp n^2$ choices for $k$ and the first-order discrete Malliavin derivatives are of order $O(n^{-1})$ by \eqref{eq:ERG1stOrder}, which shows that $B_3(i,j)=O(n^{-1})$ for all choices of $i$ and $j$. Finally, in $B_4(i,j)$ there are again ${n\choose 2}\asymp n^2$ choices for $k$ and once again by \eqref{eq:ERG1stOrder} the derivatives are of order $O(n^{-1})$. Hence, $B_4(i,j)=O(n^{-2})$ for all possible choices of $i$ and $j$. Summarizing we conclude that 
$$
d_4(\bX_{\bf\Gamma},\bN_\Sig)\leq\frac{1}{2} \sum_{i,j=1}^d \big[ |\Sigma_{ij}-\cov(X_{\Gamma_i},X_{\Gamma_j})| + B_1(i,j) + B_2(i,j) + B_3(i,j) + B_4(i,j) \big] = O(n^{-1})\,,
$$
where the constant hidden in the $O$-notation only depends on $p$ and the graphs $\G_1,\ldots,\G_d$. This completes the proof of the theorem.
\end{proof}

\begin{remark}\label{rem:PositiveDefiniteERG}\rm
The structure of the asymptotic covariance matrix $\Sig$ in the previous theorem implies that $\Sig$ has rank $1$. Thus, $\Sig$ cannot be positive definite, but it clearly is positive semidefinite.
\end{remark}

\begin{remark}\rm 
We believe that there are also other methods available in the existing literature that allow to prove results similar to Theorem \ref{thm:CLTRandomGraphs}. For example, the multivariate exchangeable pairs approach used in \cite{ReinertRollin} might be generalized to subgraph counts of arbitrary graphs. On the other hand, this might require serious technical efforts, while our proof of the quantitative multivariate central limit theorem for subgraph counts basically only requires simple (asymptotic) counting arguments. A similar comment also applies to the random cubical complexes treated in the next section.
\end{remark}

We continue our study of the Erd\H{o}s-R\'enyi random graph $\cG(n,p)$ by establishing a central limit theorem for the vertex degree statistic in the the case that $p=\theta/(n-1)$ for a $\theta \in (0,1)$. Although the number of vertices of a given degree is a special case of a subgraph counting statistic as considered above, the significant difference here is that we allow the success probability $p$ to vary with $n$.

For $i\geq 0$ we denote by $V_i$ the number of vertices of degree $i$ in the Erd\H{o}s-R\'enyi random graph $\cG(n,p)$ for a $p\in(0,1)$ and where we assume that $n$ is sufficiently large so that all quantities we deal with are well-defined. More formally, if we denote by $v_1,\ldots,v_n$ the $n$ vertices of the complete graph $K_n$, then
$$
V_i = \sum_{k=1}^n{\bf 1}\{\deg(v_k)=i\}\,,
$$
where $\deg(v_k)$ is the degree of $v_k$ in $\cG(n,p)$, that is, the (random) number of edges emanating from $v_k$. Since for each $k\in\{1,\ldots,n\}$, $\EE[{\bf 1}\{\deg(v_k)=i\}]=P(\deg(v_k)=i)={n-1\choose i}p^i(1-p)^{n-1-i}$, it follows that
$$
\EE[V_i] = n{n-1\choose i}p^i(1-p)^{n-1-i}\,.
$$
The covariance between $V_i$ and $V_j$ for $i,j\geq 0$ under the choice $p=\theta/(n-1)$ for a $\theta\in(0,1)$ has been investigated in \cite{GoldsteinRinott} and we recall from Theorem 4.2 there that
\begin{equation}\label{eq:CovDegrees}
\cov(V_i,V_j) = {1\over n}\EE[V_i]\EE[V_j]\Big({(i-\theta)(j-\theta)\over \theta(1-\theta/(n-1))}-1\Big)+{\bf 1}\{i=j\}\EE[V_i]\,.
\end{equation}

We define $F_i:=(V_i-\EE[V_i])/\sqrt{n}$, fix $d\geq 1$ as well as $1\leq i_1<\ldots<i_d$, put $\bD:=(i_1,\ldots,i_d)$ and define the random vector $\bF_\bD:=(F_{i_1},\ldots,F_{i_d})$. From now on and for the rest of this subsection, we assume that the success probability $p$ is of the form $p=\theta/(n-1)$ for some fixed $\theta\in(0,1)$. Then, it is easily seen from the expression for $\cov(V_i,V_j)$ in \eqref{eq:CovDegrees} that
\begin{equation}\label{eq:CovVertexDegrees}
\cov(F_i,F_j) = {1\over n}\cov(V_i,V_j) \to {\theta^{i+j}\over i!\,j!}e^{-2\theta}\Big({(i-\theta)(j-\theta)\over\theta}-1\Big)+{\bf 1}\{i=j\}{\theta^i\over i!}e^{-\theta}\,,
\end{equation}
as $n\to\infty$. Our next result is a multivariate central limit theorem for the vertex degree vector $\bF_\bD$. It is a version of \cite[Theorem 4.2]{GoldsteinRinott} for which, using a slightly smoother probability metric, we can give a quick proof based on our multivariate discrete second-order Poincar\'e inequality. For Berry-Essen-type rates of convergence in the one-dimensional case we refer to \cite[Theorem 2.1]{Goldstein}, \cite[Theorem 1.3]{KRT2} and \cite[Theorem 2.1 and Equation (3.3)]{Raic}.

\begin{theorem}\label{thm:CLTDegrees}
Let $\Sig=(\Sig_{ij})_{i,j=1}^d$ be the matrix given by
$$
\Sig_{ij}={\theta^{i+j}\over i!\,j!}e^{-2\theta}\Big({(i-\theta)(j-\theta)\over\theta}-1\Big)+{\bf 1}\{i=j\}{\theta^i\over i!}e^{-\theta}\,,\qquad i\in\{1,\ldots,d\}\,,
$$
and let $\bN_\Sig$ be a $d$-dimensional centred Gaussian random vector with covariance matrix $\Sig$. Then, there exists a constant $c=c(i_1,\ldots,i_d,\theta)>0$ only depending on $i_1,\ldots,i_d$ and $\theta$ such that
$$
d_4(\bF_\bD,\bN_\Sig)\leq{c\over\sqrt{n}}
$$
for all sufficiently large $n$.
\end{theorem}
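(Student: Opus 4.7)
The approach is to apply the multivariate discrete second-order Poincar\'e inequality of Theorem \ref{thm:SecondOrderPoincare} to $\bF_\bD$, following the same template as for Theorem \ref{thm:CLTRandomGraphs}. The essential novelty compared with the subgraph-count application is that the success probability $p_k = p = \theta/(n-1)$ now tends to zero with $n$: one has $p_kq_k \asymp \theta/n$ and $|p_k-q_k|\asymp 1$, so the amplification factors $1/(p_kq_k) \asymp n/\theta$ inside $B_2,B_4$ and $|p_k-q_k|/\sqrt{p_kq_k} \asymp \sqrt{n/\theta}$ inside $B_3$ no longer vanish and must be balanced carefully against the moments of the discrete Malliavin derivatives. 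The integrability hypotheses of Theorem \ref{thm:Multivariate main theorem} (and $F_i \in \dom(D^2)$) are automatic since $\bF_\bD$ depends only on finitely many Rademacher variables.

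The first step would be a structural analysis of $D_k V_i$ and $D_k D_\ell V_i$. Writing $V_i = \sum_{j=1}^n Y_j^{(i)}$ with $Y_j^{(i)} := {\bf 1}\{\deg(v_j)=i\}$, the variable $Y_j^{(i)}$ depends only on the edges incident to $v_j$, so $D_k Y_j^{(i)}$ vanishes unless $v_j \in e_k$, and $D_k D_\ell Y_j^{(i)}$ vanishes unless $v_j \in e_k \cap e_\ell$. Telescoping indicators of the form ${\bf 1}\{\deg_k^-(v_a)=i-1\}-{\bf 1}\{\deg_k^-(v_a)=i\}$ then yields the pointwise bounds $|D_k V_i| \leq 2\sqrt{p_kq_k} = O(n^{-1/2})$ and, observing that $D_k D_\ell V_i = 0$ whenever $|e_k\cap e_\ell|\in\{0,2\}$, the bound $|D_k D_\ell V_i|\leq 4p_kq_k = O(n^{-1})$ in the only remaining case $|e_k\cap e_\ell|=1$ with $k\neq\ell$. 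Dividing by $\sqrt n$ and using the trivial deterministic bounds $2$ and $4$ on the bracketed indicator sums, one deduces for every $m\in\{1,2\}$ the moment estimates $\EE[(D_k F_i)^{2m}] = O(n^{-2m})$ and, in the non-trivial case, $\EE[(D_k D_\ell F_i)^{2m}] = O(n^{-3m})$, with implicit constants depending only on $\theta, i$ and $j$.

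With these estimates in hand I would bound every summand in Theorem \ref{thm:SecondOrderPoincare}. The covariance discrepancy $|\Sig_{ij}-\cov(F_i,F_j)|$ is $O(n^{-1})$ by \eqref{eq:CovVertexDegrees} together with a Taylor expansion of $\EE[V_i]/n$ in $p = \theta/(n-1)$. For $B_1(i,j)$ and $B_2(i,j)$ the key counting observation is that the only triples $(k,\ell,m)$ with $\EE[(D_mD_kF_j)^2(D_mD_\ell F_j)^2]\neq 0$ are those satisfying $|e_k\cap e_m| = |e_\ell\cap e_m| = 1$; fixing $e_m$ leaves $2(n-2)$ admissible choices for each of $e_k$ and $e_\ell$, so the number of non-trivial triples is $O(n^4)$. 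Combined with the moment bounds (and with Cauchy--Schwarz applied inside each bracket) this produces $B_1^2, B_2^2 = O(n^4)\cdot O(n^{-5}) = O(n^{-1})$, where in $B_2$ the extra factor $1/(p_mq_m)\asymp n$ is exactly balanced by the additional $n^{-1}$ in the second-order moment. For $B_3$, summing over the $\binom n2$ admissible $k$'s gives $n^2\cdot\sqrt n\cdot n^{-1}\cdot n^{-2} = O(n^{-1/2})$; and for $B_4$ one obtains $n^2\cdot n\cdot n^{-1}\cdot n^{-3} = O(n^{-1})$. Summing over $i,j\in\{1,\ldots,d\}$ yields $d_4(\bF_\bD,\bN_\Sig) = O(n^{-1/2})$, with the constant depending only on $i_1,\ldots,i_d$ and $\theta$.

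The main obstacle, and the reason the rate cannot be improved by this method, is the term $B_3(i,j)$: because of the non-symmetry of the underlying Rademacher sequence, the factor $|p_k-q_k|/\sqrt{p_kq_k}\asymp\sqrt{n/\theta}$ is large, and only just balanced by the $O(n^{-5/2})$ decay per summand after summing over $O(n^2)$ edges. Ensuring that this balance really does yield $O(n^{-1/2})$ (and that no intersection configuration has been overlooked in the more delicate second-order counting that controls $B_1, B_2$) is the principal technical point.
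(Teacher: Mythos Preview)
Your proposal is correct and follows essentially the same route as the paper's own proof: apply Theorem \ref{thm:SecondOrderPoincare}, use the pointwise bounds $|D_kF_i|\le 2\sqrt{pq}/\sqrt{n}$ and $|D_kD_\ell F_i|\le (\text{const})\,pq/\sqrt{n}\,{\bf 1}\{|e_k\cap e_\ell|=1\}$, count the $O(n^4)$ triples $(k,\ell,m)$ with $|e_m\cap e_k|=|e_m\cap e_\ell|=1$, and observe that $B_3$ is the bottleneck at $O(n^{-1/2})$ while the remaining terms are $O(n^{-1})$. The only cosmetic difference is the constant in the second-order bound (the paper uses $2pq/\sqrt{n}$ rather than $4pq/\sqrt{n}$), which is immaterial for the rate.
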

\begin{proof}
From \eqref{eq:CovVertexDegrees} we infer that $\cov(F_i,F_j)\to\Sig_{ij}$, as $n\to\infty$. Moreover, from the structure of the covariance \eqref{eq:CovDegrees} we also conclude that $|\cov(F_i,F_j)-\Sig_{ij}|=O(n^{-1})$. 

Next, we fix $i\in\{1,\ldots,d\}$ and $k,\ell\in\{1,\ldots,{n\choose 2}\}$. As in the proof of Theorem 1.3 in \cite{KRT2} we notice that adding or removing an edge from $\cG(n,p)$ results in a change of at most $2$ for the number of vertices of degree $i$. In other words,
$$
|D_kF_i| \leq {2\sqrt{pq}\over\sqrt{n}}\,.
$$
For the second-order discrete Malliavin derivative we observe that $D_kD_\ell F_i$ is zero whenever $k=\ell$ or the edges $e_k$ and $e_\ell$ corresponding to $k$ and $\ell$, respectively, do not have a common vertex. Thus, it follows that
$$
|D_kD_\ell F_i|\leq {2pq\over\sqrt{n}}\,{\bf 1}\{|e_k\cap e_\ell|=1\}\,.
$$
We can now evaluate the terms $B_1(i,j)$ to $B_4(i,j)$ in Theorem \ref{thm:SecondOrderPoincare}. Using the Cauchy-Schwarz inequality we first conclude that
\begin{align*}
B_1(i,j)^2 &= \frac{15}{4} \sum_{k,\ell,m=1}^{n\choose 2} (\EE[(D_kF_i)^4]\EE[(D_\ell F_i)^4])^{1/4} (\EE[(D_mD_kF_j)^4]\EE[(D_mD_\ell F_j)^4])^{1/4} \\
&\leq {60(pq)^3\over n^2}\sum_{k,\ell,m=1}^{n\choose 2}{\bf 1}\{|e_m\cap e_k|=1,|e_m\cap e_\ell|=1\}\\
& = {60(pq)^3\over n^2}{n\choose 2}(2(n-2))^2 = O(n^{-1})\,,
\end{align*}
since $p=\theta/(n-1)$. Similarly, we have that $B_2(i,j)^2=O(n^{-1})$. For the remaining terms $B_3(i,j)$ and $B_4(i,j)$ we see that
\begin{align*}
B_3(i,j) &= \frac{1}{2} d^{3/2} \sum_{k=1}^{n\choose 2} \frac{|p_k-q_k|}{\sqrt{p_kq_k}} (\EE[(D_kF_i)^2])^{1/2} (\EE[(D_kF_j)^4])^{1/2} \\
&\leq  {n \choose 2} \frac{4pq}{n^{3/2}} d^{3/2} = O(n^{-1/2})\,,\\
B_4(i,j) &= \frac{5}{12} d^2 \sum_{k=1}^{n\choose 2} \frac{1}{p_kq_k} (\EE[(D_kF_i)^4])^{1/4} (\EE[(D_kF_j)^4])^{3/4}\\
&\leq {n \choose 2} \frac{20pq}{3n^2} d^2 = O(n^{-1})\,.
\end{align*}
By Theorem \ref{thm:SecondOrderPoincare} we have thus proved the result.
\end{proof}

\subsection{Intrinsic volumes of random cubical complexes}

\begin{figure}[t]
\begin{center}
\begin{tikzpicture}
\draw [dashed] (0,0) rectangle (4,4);
\draw [fill=lightgray] (2,0) rectangle (3,1);

\draw [fill=lightgray] (1,1) rectangle (2,2);

\draw [fill=lightgray] (1,2) rectangle (2,3);

\draw [fill=lightgray] (0,3) rectangle (1,4);
\draw [fill=lightgray] (3,3) rectangle (4,4);


\draw [dashed] (0+5,0) rectangle (4+5,4);
\draw [fill=lightgray] (0+5,0) rectangle (1+5,1);
\draw [fill=lightgray] (3+5,0) rectangle (4+5,1);

\draw [fill=lightgray] (2+5,1) rectangle (3+5,2);

\draw [fill=lightgray] (1+5,2) rectangle (2+5,3);
\draw [fill=lightgray] (2+5,2) rectangle (3+5,3);

\draw [fill=lightgray] (0+5,3) rectangle (1+5,4);
\draw [fill=lightgray] (2+5,3) rectangle (3+5,4);
\draw [fill=lightgray] (3+5,3) rectangle (4+5,4);


\draw [dashed] (0+10,0) rectangle (4+10,4);
\draw [fill=lightgray] (1+10,0) rectangle (2+10,1);
\draw [fill=lightgray] (2+10,0) rectangle (3+10,1);
\draw [fill=lightgray] (3+10,0) rectangle (4+10,1);

\draw [fill=lightgray] (0+10,1) rectangle (1+10,2);
\draw [fill=lightgray] (1+10,1) rectangle (2+10,2);
\draw [fill=lightgray] (3+10,1) rectangle (4+10,2);

\draw [fill=lightgray] (1+10,2) rectangle (2+10,3);
\draw [fill=lightgray] (2+10,2) rectangle (3+10,3);
\draw [fill=lightgray] (3+10,2) rectangle (4+10,3);

\draw [fill=lightgray] (0+10,3) rectangle (1+10,4);
\draw [fill=lightgray] (1+10,3) rectangle (2+10,4);
\end{tikzpicture}
\end{center}
\caption{Illustrations of the voxel model $\cC$ of a random cubical complex with $d=2$ and $n=4$ for increasing values of $p$.}
\label{fig:complexes}
\end{figure}
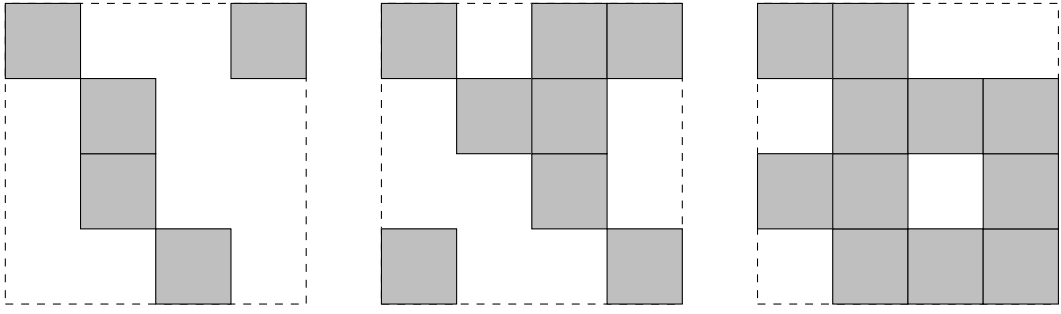

Fix a space dimension $d\geq 1$, $n \geq 3$ and consider the lattice $\cL:=\{[0,1]^d+z:z\in\{0,\ldots,n-1\}^d\}$ consisting of $n^d$ unit cubes $C_1, \dotsc, C_{n^d}$ of dimension $d$. To avoid boundary effects, we identify opposite faces in $\cL$, a convention which supplies $\cL$ with the topology of a $d$-dimensional torus. Now, we number the cubes in $\cL$ in a fixed but arbitrary way and assign to each cube $C_k\in\cL$ a Rademacher random variable $X_k$ such that $P(X_k=1)=p$ and $P(X_k=-1)=1-p=:q$ for some fixed parameter $p\in(0,1)$. Following the paper of Werman and Wright \cite{WermanWright} the voxel model for a so-called random cubical complex $\cC$ arises from $\cL$ when each cube $C_k$ is removed from $\cL$ for which $X_k=-1$, see Figure \ref{fig:complexes}. It should be clear that the random cubical complexes arising in this way may be represented as finite unions of disjoint open cubes of dimensions $0,1,\ldots,d$, corresponding to the vertices, edges, etc.

We are interested in the intrinsic volumes $V_j(\cC)$ of the random cubical complex $\cC$ for all $j\in\{0,1,\ldots,d\}$. To define these quantities formally one can follow approach from \cite{Groemer}, where Groemer introduced a way to define intrinsic volumes for the relative interior of a convex body. Since we are interested only in finite unions of cubes, we go the more direct way also used in \cite{WermanWright}. Namely, for a $\d \in \NN$, by a closed $\d$-cube we understand any translate of $[0,1]^\d$, while an open $\d$-cube refers to a translate of $(0,1)^\d$. The intrinsic volume $V_j(C)$ of order $j\in\{0,1,\ldots,i\}$ of a closed $\d$-cube $C$ is given by $V_j(C)={\d\choose j}$, while the $j$th intrinsic volume of an open $\d$-cube $D$ is $V_j(D)=(-1)^{\d-j}{\d\choose j}=:V_j(\d)$. Finally, for the random cubical complex $\cC$ as defined above we have the following representation for $V_j(\cC)$ from \cite{WermanWright}:
\begin{align}\label{eq:DefVw}
V_j(\cC) = \sum_{D\text{ open cube in }\cL}\xi_{D,j}\,,\qquad\xi_{D,j}:=(-1)^{\dim(D)-j}{\dim(D)\choose j}\,{\bf 1}\{D\text{ belongs to }\cC\}\,.
\end{align}
From this representation it readily follows that
$$
\EE[V_j(\cC)] = \sum_{D\text{ open cube in }\cL}\EE[\xi_{D,j}] = \sum_{\d=j}^dN_\d\,P_\d\,V_j(\d)\,,
$$
where $N_\d={d\choose \d}n^d$ denotes the number of $\d$-cubes in $\cL$ and $P_\d=1-q^{2^{d-\d}}$ is the probability that an arbitrary $\d$-cube is included in $\cC$, see \cite{WermanWright}. 

Although the variance of $V_j(\cC)$ has been computed in \cite{WermanWright}, in our context we will also need information about the covariance structure between $V_i(\cC)$ and $V_j(\cC)$. This is provided by the next lemma.

\begin{lemma}\label{lem:covComplexes}
Let $i,j\in\{0,1,\ldots,d\}$. Then,
\begin{align*}
\cov(V_i(\cC),V_j(\cC)) = c(i,j)\,n^d
\end{align*}
with the constant $c(i,j)$ given by
$$
c(i,j) = \sum_{a=0}^d\sum_{b=0}^d\sum_{\d=0}^d V_i(a)V_j(b) {d \choose \d} N_{a,b,\d}\,q^{2^{d-a}+2^{d-b}}(q^{-{2^{d-\d}}}-1)\,,
$$
where $N_{a,b,\d} = \sum\limits_{\ell=0}^\d(-1)^{\d-\ell}{\d\choose\ell}{\ell\choose a}{\ell\choose b}2^{\d+\ell-a-b}$.
\end{lemma}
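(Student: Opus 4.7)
The plan is to expand the covariance via the representation \eqref{eq:DefVw}, evaluate each pair-contribution using the independence of the $X_k$'s, and then group terms according to the combinatorial structure of the pair of faces involved.

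By bilinearity, $\cov(V_i(\cC), V_j(\cC)) = \sum_{D, D'} \cov(\xi_{D,i}, \xi_{D',j})$, where $D, D'$ range over ordered pairs of open cubes of $\cL$. For fixed $D$ of dimension $a$ and $D'$ of dimension $b$, let $S(D)$ denote the set of $d$-cubes of $\cL$ adjacent to $D$, so $|S(D)| = 2^{d-a}$ and the open face $D$ belongs to $\cC$ iff at least one member of $S(D)$ is kept. By the independence of $(X_k)_k$, $P(D\notin\cC,\, D'\notin\cC) = q^{|S(D)\cup S(D')|}$, and writing $m := |S(D)\cap S(D')|$, a short calculation yields
\[
\cov(\xi_{D,i}, \xi_{D',j}) = V_i(a)\,V_j(b)\,q^{2^{d-a}+2^{d-b}}\,(q^{-m}-1)\,,
\]
a quantity that vanishes precisely when $\bar D \cup \bar{D'}$ does not fit into any closed $d$-cube of $\cL$.

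Next I would analyze the geometric meaning of $m$. A direct coordinate-by-coordinate analysis of the cubes of $\cL$ (via their $\{0,1,*\}^d$-labellings) shows that whenever $m > 0$ there exists a unique smallest cube $H \in \cL$ with $\bar D \cup \bar{D'} \subset \bar H$, and $S(D) \cap S(D') = S(H)$, so $m = 2^{d-\d}$ where $\d := \dim H$. Grouping pairs by the dimension and position of this minimal containing cube, and using that the torus $\cL$ carries exactly $\binom{d}{\d} n^d$ cubes of dimension $\d$, the covariance becomes
\[
\cov(V_i(\cC), V_j(\cC)) = n^d \sum_{a,b,\d=0}^d \binom{d}{\d}\,N_{a,b,\d}\,V_i(a)\,V_j(b)\,q^{2^{d-a}+2^{d-b}}\,(q^{-2^{d-\d}}-1)\,,
\]
where $N_{a,b,\d}$ is defined as the number of ordered pairs of an open $a$-face and an open $b$-face whose minimal containing $\cL$-cube is one fixed $\d$-cube.

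The main obstacle is to compute $N_{a,b,\d}$ combinatorially, and for this I would apply Möbius inversion on the face lattice of a $\d$-cube. The key input is that this face poset factorizes as $\d$ copies of the three-element poset $\{0, 1 < *\}$, so that the Möbius function between an $\ell$-subface and the whole $\d$-cube is simply $(-1)^{\d-\ell}$. A closed $\ell$-cube contains $\binom{\ell}{a} 2^{\ell-a}$ open $a$-faces, hence the number of ordered pairs of an open $a$-face and an open $b$-face contained in the closure of a fixed $\ell$-cube equals $B(\ell) := \binom{\ell}{a}\binom{\ell}{b}\,2^{2\ell-a-b}$. Partitioning these pairs according to their minimal containing sub-cube gives $B(\d) = \sum_{\ell=0}^{\d}\binom{\d}{\ell}\,2^{\d-\ell}\,N_{a,b,\ell}$, and Möbius inversion then yields
\[
N_{a,b,\d} = \sum_{\ell=0}^{\d}(-1)^{\d-\ell}\binom{\d}{\ell}\,2^{\d-\ell}\,B(\ell) = \sum_{\ell=0}^{\d}(-1)^{\d-\ell}\binom{\d}{\ell}\binom{\ell}{a}\binom{\ell}{b}\,2^{\d+\ell-a-b}\,,
\]
matching the formula in the lemma. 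I expect this inclusion-exclusion step to be the most delicate, since one must set up the face-lattice Möbius function carefully and verify that its counting interpretation really coincides with the stated formula.
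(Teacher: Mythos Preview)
Your proposal is correct and follows the same route as the paper: expand the covariance over ordered pairs of open faces, note that only pairs sharing a containing $d$-cube contribute, group by the minimal containing cube $C(D,D')$ and its dimension $\d$, and evaluate. The only difference is that the paper simply cites \cite{WermanWright} for the joint inclusion probability $P_{a,b,\d}$ and the count $N_{a,b,\d}$, whereas you derive the covariance contribution $q^{2^{d-a}+2^{d-b}}(q^{-m}-1)$ and the formula for $N_{a,b,\d}$ from scratch via M\"obius inversion on the face lattice of a $\d$-cube---this makes your argument more self-contained but is not a departure in method.
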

\begin{proof}
We first notice that for two open cubes $D$ and $D'$ in $\cL$ (possibly having different dimensions) the random variables $\xi_D$ and $\xi_{D'}$ are independent whenever $D$ and $D'$ are not faces of a common $d$-dimensional cube from $\cL$. Thus, using \eqref{eq:DefVw} we conclude that
\begin{align*}
\cov(V_i(\cC),V_j(\cC)) = \sum_{D,D'}\cov(\xi_{D,i},\xi_{D',j}) = \sum_{D,D'}\big(\EE[\xi_{D,i}\xi_{D',j}]-\EE[\xi_{D,i}]\EE[\xi_{D',j}]\big)
\end{align*}
with the sum running over all open cubes $D,D'$ in $\cL$ that are faces of a common $d$-cube. To evaluate this sum, we observe that for each pair of cubes $D,D'$ there is a unique cube $C(D,D')$ of which $D$ and $D'$ are common faces and which has the smallest dimension among all such cubes (in fact, the existence of such a cube is the reason why $n\geq 3$ is assumed in this section). On the contrary, if $C$ is a cube of dimension $\d\in\{0,1,\ldots,d\}$, we let $N_{a,b,\d}$ be the number of pairs of cubes $D$ and $D'$ of dimensions $a$ and $b$, respectively, for which $C(D,D')=C$. We notice that the value of $N_{a,b,\d}$ is independent of the particular choice of $C$ and given by
$$
N_{a,b,\d} = \sum_{\ell=0}^\d(-1)^{\d-\ell}{\d\choose\ell}{\ell\choose a}{\ell\choose b}2^{\d+\ell-a-b}
$$
according to Equation (18) in \cite{WermanWright}. Especially, $N_{a,b,\d}$ is independent of $n$. Moreover, following Equation (20) in \cite{WermanWright} we denote by
$$
P_{a,b,\d} = (1-q^{2^{d-\d}})+q^{2^{d-\d}}(1-q^{2^{d-a}-2^{d-\d}})(1-q^{2^{d-b}-2^{d-\d}})
$$
the probability that both $D$ and $D'$ are included in the cubical complex $\cC$. Then, we conclude that
\begin{align*}
\cov(V_i(\cC),V_j(\cC)) &= \sum_{a=0}^d\sum_{b=0}^d\sum_{\d=0}^d N_\d N_{a,b,\d}\big(\EE[\xi_{D,i}\xi_{D',j}]-\EE[\xi_{D,i}]\EE[\xi_{D',j}]\big)\,.
\end{align*}
According to our above discussion, the two expectations $\EE[\xi_{D,i}]$ and $\EE[\xi_{D',j}]$ are given by $\EE[\xi_{D,i}]=P_aV_i(a)$ and $\EE[\xi_{D',j}]=P_bV_j(b)$. Finally, $\EE[\xi_{D,i}\xi_{D',j}]$ equals $P_{a,b,\d}V_i(a)V_j(b)$, which implies that
\begin{align*}
\cov(V_i(\cC),V_j(\cC)) &= \sum_{a=0}^d\sum_{b=0}^d\sum_{\d=0}^d V_i(a)V_j(b)N_\d N_{a,b,\d}(P_{a,b,\d}-P_aP_b)\,.
\end{align*}
Since $P_{a,b,\d}-P_aP_b=q^{2^{d-a}+2^{d-b}}(q^{-{2^{d-\d}}}-1)$, the proof is complete.
\end{proof}

Now, define for $j\in\{0,1,\ldots,d\}$ the centred and normalized random variables $\widetilde{V}_j(\cC):=n^{-d/2}(V_j(\cC)-\EE[V_j(\cC)])$ as well as the random vector $\bV:=(\widetilde{V}_0(\cC),\widetilde{V}_1(\cC),\ldots,\widetilde{V}_d(\cC))$. Our next theorem provides a bound for the multivariate normal approximation of $\bV$ and this way extends Theorem 4 in \cite{WermanWright}.

\begin{theorem}\label{thm:Complexes}
Let $\Sig:=(\Sig_{ij})_{i,j=0}^d$ be the matrix $\Sig_{ij}:=c(i,j)$ with the constants $c(i,j)$ given by Lemma \ref{lem:covComplexes}. Then, there exists a constant $C=C(p,d)$ only depending on $p$ and on $d$ such that
$$
d_4(\bV,\bN_\Sig)\leq {C\over n^{d/2}}\,,
$$
where $\bN_\Sig$ is a $(d+1)$-dimensional centred Gaussian vector with covariance matrix $\Sig$.
\end{theorem}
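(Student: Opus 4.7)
My plan is to apply the multivariate discrete second-order Poincar\'e inequality (Theorem \ref{thm:SecondOrderPoincare}) to the centred and normalized random vector $\bV$. The Rademacher sequence here is homogeneous with $p_k=p$, so all factors $1/(p_kq_k)$ and $|p_k-q_k|/\sqrt{p_kq_k}$ are absolute constants depending only on $p$. Moreover, by Lemma \ref{lem:covComplexes} the identity $\cov(\widetilde V_i(\cC),\widetilde V_j(\cC))=n^{-d}\cov(V_i(\cC),V_j(\cC))=c(i,j)=\Sigma_{ij}$ holds exactly, so the leading term $|\Sigma_{ij}-\cov(F_i,F_j)|$ in Theorem \ref{thm:SecondOrderPoincare} vanishes.

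The crucial input is a locality estimate for the Malliavin derivatives. Flipping a single $X_k$ only adds or removes the unit $d$-cube $C_k$, which alters the count of open faces belonging to $\cC$ by at most $3^d$ (the number of faces of $C_k$). Since $|V_j(\cC_k^+)-V_j(\cC_k^-)|$ is bounded by a constant $\kappa_d$ depending only on $d$, one obtains
$$
|D_k \widetilde V_j(\cC)| \leq \frac{\sqrt{pq}\,\kappa_d}{n^{d/2}}
\qquad\text{for every } k\,.
$$
For the second-order derivative, the key observation is that $D_kD_\ell \widetilde V_j(\cC)=0$ whenever the unit $d$-cubes $C_k$ and $C_\ell$ share no common face, because in that case the effect of flipping $X_k$ does not depend on $X_\ell$ (the face-sets affected are disjoint). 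On the torus $\cL$, each $C_k$ has a constant number $\nu_d\leq 3^d-1$ of neighbouring $d$-cubes with which it shares at least one face, and in this case a second application of the bound above yields $|D_kD_\ell \widetilde V_j(\cC)|\leq 2pq\,\kappa_d/n^{d/2}$.

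With these two bounds at hand, the remaining work is bookkeeping. For $B_1(i,j)^2$, the triple sum $\sum_{k,\ell,m}$ contains $n^d\cdot \nu_d\cdot \nu_d = O(n^d)$ non-vanishing summands (given $k$ we need $C_m$ adjacent to $C_k$, and then $C_\ell$ adjacent to $C_m$), each of which is bounded by $O(n^{-2d})$ by Cauchy--Schwarz on the derivative bounds. Hence $B_1(i,j)^2=O(n^{-d})$, i.e.\ $B_1(i,j)=O(n^{-d/2})$. The same counting applies to $B_2(i,j)^2$ (summing over $m$, then over neighbours $k,\ell$ of $C_m$), giving $B_2(i,j)=O(n^{-d/2})$. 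For $B_3(i,j)$ the sum runs over the $n^d$ unit cubes with summand $O(n^{-d/2})\cdot O(n^{-d})=O(n^{-3d/2})$, yielding $B_3(i,j)=O(n^{-d/2})$. Finally, $B_4(i,j)$ is a sum of $n^d$ terms each bounded by $O(n^{-2d})$, so $B_4(i,j)=O(n^{-d})$ and is negligible. Adding the finitely many contributions over $i,j\in\{0,1,\dotsc,d\}$ gives the asserted bound.

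The only mildly delicate point is the second-order locality statement $D_kD_\ell \widetilde V_j(\cC)=0$ when $C_k\cap C_\ell=\emptyset$. I would justify this by writing $\widetilde V_j(\cC)=\sum_D \xi_{D,j}$ as in \eqref{eq:DefVw} and noting that $\xi_{D,j}$ depends only on those $X_m$ for which $D$ is a face of $C_m$; the difference operator $D_k$ leaves $\xi_{D,j}$ unchanged unless $D$ is a face of $C_k$, and a further application $D_\ell$ requires $D$ to also be a face of $C_\ell$, forcing $C_k$ and $C_\ell$ to share a face. This locality, combined with the uniform boundedness of each $\xi_{D,j}$ (by $\binom{d}{j}$), is really all that drives the whole argument; once it is in place, each $B_r(i,j)$ is obtained by the routine counting sketched above.
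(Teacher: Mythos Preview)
Your proposal is correct and follows essentially the same approach as the paper's proof: apply Theorem~\ref{thm:SecondOrderPoincare}, observe that the covariance term vanishes exactly by Lemma~\ref{lem:covComplexes}, bound the first- and second-order derivatives by $O(n^{-d/2})$, exploit the locality of the second-order derivative (non-zero only for neighbouring cubes), and count. Your write-up is in fact somewhat more explicit than the paper's---in particular, your justification of the locality statement $D_kD_\ell\widetilde V_j(\cC)=0$ via the face-dependence of each $\xi_{D,j}$, and your separate bookkeeping for $B_3$ and $B_4$ (the paper simply asserts that all four terms behave the same way).
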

\begin{proof}
By Lemma \ref{lem:covComplexes} it follows that, for all $i,j\in\{0,1,\ldots,d\}$, $\cov(\widetilde{V}_i(\cC),\widetilde{V}_j(\cC))=\Sig_{ij}$. Thus, it only remains to bound the terms $B_1(i,j)$ to $B_4(i,j)$ in Theorem \ref{thm:SecondOrderPoincare}. To this end, we need appropriate estimates for the first- and second-order discrete Malliavin derivatives $D_k\widetilde{V}_i(\cC)$ and $D_kD_\ell\widetilde{V}_i(\cC)$ for all $i\in\{0,1,\ldots,d\}$, respectively. For this, we recall the representation \eqref{eq:DefVw} and observe that for each $k\in\{1,\ldots,n^d\}$, $D_k\widetilde{V}_i(\cC)$ can be written as $\sqrt{pq}/n^{d/2}$ times a sum of at most $6^d$ summands, where each of them is bounded independently of $n$. Here, $6^d\geq 2^{d-\d}\cdot 3^d$ for any $\d\in\{0,1,\ldots,d\}$ and $2^{d-\d}$ is the number of $d$-dimensional cubes of which a fixed $\d$-dimensional cube is a face of, while $3^d=\sum_{\d=0}^d{d\choose \d}2^{d-\d}$ is the total number of faces of a $d$-dimensional cube. As a consequence, we find that
$$
D_k\widetilde{V}_i(\cC) = O(n^{-d/2})
$$
and by the triangle inequality also
$$
D_kD_\ell\widetilde{V}_i(\cC) = O(n^{-d/2})\,,
$$
where the hidden constants only depend on $d$ and on $p$. Now, it is crucial to observe that for any fixed $k\in\{1,\ldots,n^d\}$ the second-order discrete Malliavin derivative $D_kD_\ell\widetilde{V}_i(\cC)$ is even identically zero whenever the cubes corresponding to $k$ and $\ell$ are not neighbours of each other. Since any cube in $\cL$ has only a finite number of neighbours, independently of $n$, we conclude that in the term $B_1(i,j)$ provided by the multivariate discrete second-order Poincar\'e inequality in Theorem \ref{thm:SecondOrderPoincare} there are exactly $n^d$ choices for $m$ and only a constant number of choices for $k$ and $\ell$ for which the corresponding summand is non-vanishing. As a consequence, $B_1(i,j)^2$ is of order $O(n^d\cdot n^{-d}\cdot n^{-d})=O(n^{-d})$, implying that $B_1(i,j)=O(n^{-d/2})$ for any choice of $i,j\in\{0,1,\ldots,d\}$. Since the same behaviour can also be observed for the remaining terms $B_2(i,j),B_3(i,j)$ and $B_4(i,j)$, the claim follows.
\end{proof}

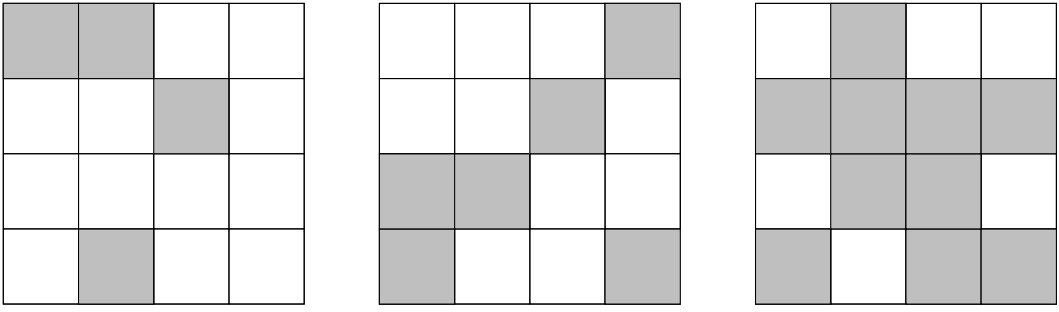
\begin{figure}[t]
\begin{center}
\begin{tikzpicture}
\draw (0,0) rectangle (4,4);
\draw (0,0) rectangle (1,1);[fill=lightgray]
\draw (1,0) rectangle (2,1);
\draw [fill=lightgray] (1,0) rectangle (2,1);
\draw (2,0) rectangle (3,1);
\draw (3,0) rectangle (4,1);

\draw (0,1) rectangle (1,2);
\draw (1,1) rectangle (2,2);
\draw (2,1) rectangle (3,2);
\draw (3,1) rectangle (4,2);

\draw  (0,2) rectangle (1,3);
\draw  (1,2) rectangle (2,3);
\draw  (2,2) rectangle (3,3);
\draw [fill=lightgray] (2,2) rectangle (3,3);
\draw  (3,2) rectangle (4,3);

\draw  (0,3) rectangle (1,4);
\draw [fill=lightgray] (0,3) rectangle (1,4);
\draw  (1,3) rectangle (2,4);
\draw [fill=lightgray] (1,3) rectangle (2,4);
\draw  (2,3) rectangle (3,4);
\draw  (3,3) rectangle (4,4);


\draw (0+5,0) rectangle (4+5,4);
\draw  (0+5,0) rectangle (1+5,1);
\draw [fill=lightgray] (5,0) rectangle (6,1);
\draw  (1+5,0) rectangle (2+5,1);
\draw  (2+5,0) rectangle (3+5,1);
\draw  (3+5,0) rectangle (4+5,1);
\draw [fill=lightgray] (8,0) rectangle (9,1);

\draw  (0+5,1) rectangle (1+5,2);
\draw [fill=lightgray] (5,1) rectangle (6,2);
\draw  (1+5,1) rectangle (2+5,2);
\draw [fill=lightgray] (6,1) rectangle (7,2);
\draw  (2+5,1) rectangle (3+5,2);
\draw  (3+5,1) rectangle (4+5,2);

\draw  (0+5,2) rectangle (1+5,3);
\draw  (1+5,2) rectangle (2+5,3);
\draw  (2+5,2) rectangle (3+5,3);
\draw [fill=lightgray] (7,2) rectangle (8,3);
\draw  (3+5,2) rectangle (4+5,3);

\draw  (0+5,3) rectangle (1+5,4);
\draw  (1+5,3) rectangle (2+5,4);
\draw  (2+5,3) rectangle (3+5,4);
\draw  (3+5,3) rectangle (4+5,4);
\draw [fill=lightgray] (8,3) rectangle (9,4);


\draw (0+10,0) rectangle (4+10,4);
\draw  (0+10,0) rectangle (1+10,1);
\draw [fill=lightgray] (10,0) rectangle (11,1);
\draw  (1+10,0) rectangle (2+10,1);
\draw  (2+10,0) rectangle (3+10,1);
\draw [fill=lightgray] (12,0) rectangle (13,1);
\draw  (3+10,0) rectangle (4+10,1);
\draw [fill=lightgray] (13,0) rectangle (14,1);

\draw  (0+10,1) rectangle (1+10,2);
\draw  (1+10,1) rectangle (2+10,2);
\draw [fill=lightgray] (11,1) rectangle (12,2);
\draw  (2+10,1) rectangle (3+10,2);
\draw [fill=lightgray] (12,1) rectangle (13,2);
\draw  (3+10,1) rectangle (4+10,2);

\draw  (0+10,2) rectangle (1+10,3);
\draw [fill=lightgray] (10,2) rectangle (11,3);
\draw  (1+10,2) rectangle (2+10,3);
\draw [fill=lightgray] (11,2) rectangle (12,3);
\draw  (2+10,2) rectangle (3+10,3);
\draw [fill=lightgray] (12,2) rectangle (13,3);
\draw  (3+10,2) rectangle (4+10,3);
\draw [fill=lightgray] (13,2) rectangle (14,3);

\draw  (0+10,3) rectangle (1+10,4);
\draw  (1+10,3) rectangle (2+10,4);
\draw [fill=lightgray] (11,3) rectangle (12,4);
\draw  (2+10,3) rectangle (3+10,4);
\draw  (3+10,3) rectangle (4+10,4);
\end{tikzpicture}
\end{center}
\caption{Illustrations of the plaquette model $\cP$ of a random cubical complex with $d=2$ and $n=4$ for increasing values of $p$. The grey cubes are included, while the white cubes are not included in $\cP$.}
\label{fig:complexes2}
\end{figure}

Besides of the voxel model, the authors of \cite{WermanWright} also consider three further models for random cubical complexes: the plaquette model, the closed faces model and the independent faces model. For each of these models our method can be used to derive a multivariate central limit theorem for the random vector of their intrinsic volumes and to obtain bounds on the $d_4$-distance of order $O(n^{-d/2})$ in each case. We present the result only in the case of the plaquette model, since it is close in spirit to the celebrated random simplicial complexes introduced by Linial and Meshulam \cite{LinialMeshulam} that have been object of intensive studies. To introduce the model formally, we fix $d\geq 1$, $n\geq 3$, and define the set $\cG:=\{\partial[0,1]^d+z:z\in\{0,\ldots,n-1\}^d\}$, where $\partial[0,1]^d$ stands for the boundary of the unit $d$-cube $[0,1]^d$. The open cubes $C_1, \dotsc, C_{n^d}$ in $\{(0,1)^d+z:z\in\{0,\ldots,n-1\}^d\}$ are assumed to be numbered in a fixed but arbitrary way and we assign to each cube $C_k$ a Rademacher random variable $X_k$ with $P(X_k=1)=p$ and $P(X_k=-1)=1-p=:q$. The plaquette model now arises if those open cubes $C_k$ are joint with the set $\cG$ for which the associated Rademacher random variable $X_k$ takes the value $1$, see Figure \ref{fig:complexes2}. 

The construction just described gives rise to a random set $\cP$ and as in the case of the voxel model $\cC$ we are interested in its intrinsic volumes $V_j(\cP)$, $j\in\{0,1,\ldots,d\}$. Using the same notation as in the previous example, we formally have that $V_j(\cP)=\sum_D\xi_D$ with the sum running over all open cubes in $\cP$ and hence $\EE[V_j(\cP)]=\sum_{\d=j}^dN_\d P_\d V_j(\d)$. However, in the plaquette model we have that the probabilities $P_0,P_1\ldots,P_d$ satisfy $P_d=p$ and $P_\d=1$ for $\d \in \{ 0, \dotsc, d-1 \}$, which implies (after some simplifications) that
$$
\EE[V_j(\cP)] = \begin{cases}
pn^d &: j=d\,,\\
(-1)^{d-j}{d\choose j}(p-1)n^d &: j \in \{ 0, \dotsc, d-1 \}\,,
\end{cases}
$$
see also Equation (27) in \cite{WermanWright}. The covariance structure of the intrinsic volumes for the plaquette model is described in the next lemma.

\begin{lemma}\label{lem:CovariancePlaquette}
Let $i,j\in\{0,1,\ldots,d\}$. Then,
$$
\cov(V_i(\cP),V_j(\cP)) = {d\choose i}{d\choose j}p(1-p)\,n^d\,.
$$
\end{lemma}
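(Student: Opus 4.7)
The plan is to exploit the structural simplification that the plaquette model offers compared with the voxel model: in $\cP$, every face of dimension strictly less than $d$ is contained in the deterministic skeleton $\cG$, so the only randomness comes from the $n^d$ open $d$-dimensional cubes $C_1,\ldots,C_{n^d}$, each present in $\cP$ exactly when its associated Rademacher variable equals $+1$. Applying the representation \eqref{eq:DefVw} with $\cC$ replaced by $\cP$ and splitting the sum according to dimension, contributions from cubes with $\dim D < d$ collapse into a deterministic constant (depending on $j$), whereas contributions from the $d$-dimensional cubes amount to $V_j(d)$ times a single Bernoulli indicator.

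Concretely, setting $c_j := \sum_{\d=0}^{d-1} N_\d V_j(\d)$ and $S := \sum_{k=1}^{n^d}{\bf 1}\{X_k=+1\}$, I would write
\[
V_j(\cP) \;=\; c_j + V_j(d)\,S\,,
\]
where $S \sim \Bin(n^d, p)$. Bilinearity of covariance together with the fact that $c_i,c_j$ are deterministic then immediately yields
\[
\cov\!\big(V_i(\cP),V_j(\cP)\big) \;=\; V_i(d)\,V_j(d)\,\Var(S) \;=\; V_i(d)\,V_j(d)\,n^d\,p(1-p)\,,
\]
and substituting the explicit value $V_j(d) = (-1)^{d-j}{d\choose j}$ produces the asserted expression, the signs collapsing through $(-1)^{(d-i)+(d-j)} = (-1)^{2d-i-j}$.

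There is essentially no technical obstacle here, in sharp contrast with the corresponding covariance formula for the voxel model (Lemma \ref{lem:covComplexes}): there, subfaces of dimension strictly less than $d$ are themselves random, and one has to account for the combinatorial coefficients $N_{a,b,\d}$ counting pairs of subcubes sharing a common smallest ancestor $\d$-cube; here, the randomness concentrates on the $d$-cubes alone and the whole computation reduces to a single binomial variance. The only step requiring care is the initial decomposition of $V_j(\cP)$ into its deterministic and random parts, which follows from carefully parsing the open/closed cube conventions underlying \eqref{eq:DefVw}.
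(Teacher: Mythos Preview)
Your approach is essentially the paper's: both arguments rest on the observation that in the plaquette model every face of dimension $<d$ is deterministically present, so all randomness sits in the $n^d$ open $d$-cubes. The paper expands $\cov(V_i,V_j)=\sum_{D,D'}\cov(\xi_{D,i},\xi_{D',j})$ and notes that the only surviving terms are the diagonal ones with $D=D'$ a $d$-cube; your packaging via $V_j(\cP)=c_j+V_j(d)\,S$ with $S\sim\Bin(n^d,p)$ is the same computation in aggregated form, and arguably a touch cleaner.

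One point to flag: your final clause claims the signs ``collapse'' to yield the stated formula, but $(-1)^{2d-i-j}=(-1)^{i+j}$ is $-1$ when $i+j$ is odd, so in fact $V_i(d)V_j(d)=(-1)^{i+j}\binom{d}{i}\binom{d}{j}$ and the covariance should carry this sign. The paper's own proof makes the identical slip (writing $\EE[\xi_{D,i}\xi_{D,j}]=\binom{d}{i}\binom{d}{j}p$ without the factor $(-1)^{2d-i-j}$), so you are faithfully reproducing its argument; but a quick sanity check in $d=1$ gives $V_0(\cP)=n-S$ and $V_1(\cP)=S$, hence $\cov(V_0,V_1)=-np(1-p)$, confirming that the sign is genuinely there. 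This does not affect Theorem~\ref{thm:Complexes2}, since the resulting covariance matrix is still rank one and positive semidefinite.
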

\begin{proof}
By definition it follows that
$$
\cov(V_i(\cP),V_j(\cP)) = \sum_{D,D'}\cov(\xi_{D,i},\xi_{D',j})\,,
$$
again with the sum running over all open cubes $D$ and $D'$ in $\cP$. We notice that in this model the random variables $\xi_{D,i}$ and $\xi_{D',j}$ are independent except if $D=D'$ and $\dim(D)=\dim(D')=d$. In this case, we clearly have
$$
\cov(\xi_{D,i},\xi_{D',j})=\EE[\xi_{D,i}\xi_{D,j}]-\EE[\xi_{D,i}]\EE[\xi_{D,j}]={d\choose i}{d\choose j}p-{d\choose i}p\cdot{d\choose j}p={d\choose i}{d\choose j}p(1-p)
$$
and the result follows.
\end{proof}

Now, we define the centred and normalized random variables $\widehat{V}_j(\cP):=n^{-d/2}(V_j(\cP)-\EE[V_j(\cP)])$ and the $(d+1)$-dimensional random vector $\bW:=(\widehat{V}_0(\cP),\widehat{V}_1(\cP),\ldots,\widehat{V}_d(\cP))$. The next result is a multivariate central limit theorem for the random vector $\bW$. Since the arguments
are the same as in the proof of Theorem \ref{thm:Complexes}, we have decided not to present the details.

\begin{theorem}\label{thm:Complexes2}
Let $\Sig:=(\Sig_{ij})_{i,j=0}^d$ be the matrix given by $\Sig_{ij}={d\choose i}{d\choose j}p(1-p)$. Then, there exists a constant $C=C(p,d)$ only depending on $p$ and on $d$ such that
$$
d_4(\bW,\bN_\Sig) \leq {C\over n^{d/2}}
$$
with a $(d+1)$-dimensional centred Gaussian random vector $\bN_\Sig$ having covariance matrix $\Sig$.
\end{theorem}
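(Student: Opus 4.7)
The plan is to apply the multivariate discrete second-order Poincar\'e inequality (Theorem~\ref{thm:SecondOrderPoincare}) with $\bF=\bW$, and to verify that each of the five contributions on its right-hand side is at most of order $O(n^{-d/2})$. The underlying Rademacher sequence here consists of the variables $X_1,\ldots,X_{n^d}$ attached to the top-dimensional open cubes, all with common success parameter $p_k=p$ and $q_k=q=1-p$.

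For the covariance matching term, Lemma~\ref{lem:CovariancePlaquette} together with the normalization $\widehat{V}_j(\cP)=n^{-d/2}(V_j(\cP)-\EE[V_j(\cP)])$ yields $\cov(\widehat{V}_i(\cP),\widehat{V}_j(\cP))=\Sig_{ij}$ exactly, so this term vanishes identically. For the Malliavin derivatives, the decisive structural feature of the plaquette model is that only the top-dimensional open cubes are random: every lower-dimensional open cube is already contained in $\cG$ and therefore always belongs to $\cP$. Reading off \eqref{eq:DefVw}, flipping $X_k$ alters $V_j(\cP)$ only through the single summand $\xi_{C_k,j}$, with deterministic change $(-1)^{d-j}\binom{d}{j}$. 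Consequently
\begin{align*}
D_k\widehat{V}_j(\cP) \;=\; \frac{\sqrt{pq}}{n^{d/2}}\,(-1)^{d-j}\binom{d}{j}
\end{align*}
is a deterministic constant of order $O(n^{-d/2})$ uniformly in $k$, and iterating the derivative yields $D_kD_\ell\widehat{V}_j(\cP)=0$ for all $k,\ell\in\{1,\ldots,n^d\}$. In particular, this forces $B_1(i,j)=B_2(i,j)=0$ for every $i,j\in\{0,\ldots,d\}$.

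It then remains to bound $B_3(i,j)$ and $B_4(i,j)$, which is a direct computation once the derivatives above are inserted: since every summand is deterministic, the expectations collapse, and each sum contains exactly $n^d$ terms of size $n^{-3d/2}$ and $n^{-2d}$, respectively, yielding
\begin{align*}
B_3(i,j)=O(n^{-d/2}) \qquad\text{and}\qquad B_4(i,j)=O(n^{-d})\,,
\end{align*}
with constants depending only on $p$ and $d$. Assembling these bounds via Theorem~\ref{thm:SecondOrderPoincare} and summing over the finitely many indices $(i,j)\in\{0,\ldots,d\}^2$ then produces the claimed rate $O(n^{-d/2})$. There is no real obstacle here: compared with the voxel model of Theorem~\ref{thm:Complexes}, the plaquette model is strictly simpler because $V_j(\cP)$ is an affine function of the indicator variables $\mathbf{1}\{X_k=1\}$. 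This linearity trivializes the second-order derivative bounds and renders the locality argument used previously automatic; the only point demanding any care is that the chosen normalization reproduces the covariance exactly, which is delivered at once by Lemma~\ref{lem:CovariancePlaquette}.
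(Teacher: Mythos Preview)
Your proof is correct and follows the same overall strategy as the paper---applying the multivariate discrete second-order Poincar\'e inequality (Theorem~\ref{thm:SecondOrderPoincare}) and bounding each term---but you exploit a structural simplification that the paper does not make explicit. The paper merely states that the argument is identical to that of Theorem~\ref{thm:Complexes}, which would proceed via the generic estimates $D_k\widehat{V}_j(\cP)=O(n^{-d/2})$ and $D_kD_\ell\widehat{V}_j(\cP)=O(n^{-d/2})$ together with the locality observation that the second-order derivative vanishes unless the cubes indexed by $k$ and $\ell$ are neighbours. You instead observe that in the plaquette model every open cube of dimension below $d$ is deterministically present, so $V_j(\cP)$ is an affine function of the indicators $\mathbf{1}\{X_k=1\}$; this yields the exact formula $D_k\widehat{V}_j(\cP)=\sqrt{pq}\,n^{-d/2}(-1)^{d-j}\binom{d}{j}$ and forces all second-order derivatives to be identically zero, so that $B_1(i,j)=B_2(i,j)=0$ and no locality argument is needed at all. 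Your route is therefore strictly simpler and gives slightly sharper control ($B_4(i,j)=O(n^{-d})$ rather than $O(n^{-d/2})$), while the paper's reference to the voxel-model proof has the advantage of requiring no separate analysis.
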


\begin{remark}\rm 
As for subgraph counting statistics it follows from the structure of the asymptotic covariance matrices $\Sig$ in Theorems \ref{thm:Complexes} and \ref{thm:Complexes2} that $\Sig$ only has rank $1$ and is hence only positive semidefinite rather than positive definite.
\end{remark}


\begin{thebibliography}{99}

\bibitem{AdlerEtAl}
R.J. Adler, O. Bobrowski, M.S. Borman, E. Subag and S. Weinberger (2010). Persistent homology for random fields and complexes. In {\em Borrowing strength: theory powering applications - a Festschrift for Lawrence D. Brown}, IMS Collections {\bf 6}, 124--143.

\bibitem{BobrowskiKahle}
O. Bobrowski and M. Kahle (2014+): Topology of random geometric complexes: a survey. To appear in {\em Topology in Statistical Inference, the Proceedings of Symposia in Applied Mathematics}.


\bibitem{Goldstein}
L. Goldstein (2013). A Berry-Esseen bound with applications to vertex degree counts in the Erd\H{o}s-R\'enyi random graph. Ann. Appl. Probab. {\bf 23}, 617--636.

\bibitem{GoldsteinRinott}
L. Goldstein and Y. Rinott (1996). Multivariate normal approximations by Stein's method and size bias couplings. J. Appl. Probab. {\bf 33}, 1--17. 

\bibitem{Groemer}
H. Groemer (1972). Eulersche Charakteristik, Projektionen und Querma\ss integrale.  Math. Ann. {\bf 198}, 23--56.

\bibitem{JansonNowicki}
S. Janson and K. Nowicki (1991). The asymptotic distributions of generalized U-statistics with applications to random graphs. Probab. Theory Relat. Fields {\bf 90}, 341--375.


\bibitem{KahleSurvey}
M. Kahle (2014). Topology of random simplicial complexes: a survey. In {\em Algebraic Topology: Applications and New Directions}, Contemporary Mathematics Volume {\bf 620}, American Mathematical Society.

\bibitem{Krokowski}
K. Krokowski (2015). Poisson approximation of Rademacher functionals by the Chen-Stein method and Malliavin calculus. arXiv 1505.01417.

\bibitem{KRT1}
K. Krokowski, A. Reichenbachs and C. Th\"ale (2016). Berry-Esseen bounds and multivariate limit theorems for functionals of Rademacher sequences. Ann. Inst. H. Poincar\'e Probab. Stat. {\bf 52}, 763--803.

\bibitem{KRT2}
K. Krokowski, A. Reichenbachs and C. Th\"ale (2016+). Discrete Malliavin-Stein method: Berry-Esseen bounds for random graphs and percolation. to appear in Ann. Probab.

\bibitem{LinialMeshulam}
N. Linial and R. Meshulam (2006). Homological connectivity of random 2-complexes. Combinatorica {\bf 26}, 475--487.

\bibitem{NourdinPeccatiReinertRademacher}
I. Nourdin, G. Peccati and G. Reinert (2010). Stein's method and stochastic analysis of Rademacher functionals. Electron. J. Probab. {\bf 15}, 1703--1742.

\bibitem{NoudinPeccatiReveillac}
I. Nourdin, G. Peccati and A. R\'eveillac (2010). Multivariate normal approximation using Stein's method and Malliavin calculus. Ann. Inst. H. Poincar\'e Probab. Stat. {\bf 46}, 45--58.

\bibitem{PeccatiZheng}
G. Peccati and C. Zheng (2010). Multi-dimensional Gaussian fluctuations on the Poisson space. Electron. J. Probab. {\bf 15}, 1487--1527.

\bibitem{PrivaultLMN}
N. Privault (2009). {\em Stochastic Analysis in Discrete and Continuous Settings with Normal Martingales}. Lecture Notes in Mathematics {\bf 1982}, Springer-Verlag, Berlin.

\bibitem{PrivaultTorrisi}
N. Privault and G.L. Torrisi (2015). The Stein and Chen-Stein methods for functionals of non-symmetric Bernoulli processes. ALEA Lat. Am. J. Probab. Math. Stat. {\bf 12}, 309--356.

\bibitem{Raic}
M. Rai\v{c} (2007). CLT-related large deviation bounds based on Stein's method. Adv. Appl. Probab. {\bf 39}, 731--752.

\bibitem{ReinertRollin}
G. Reinert and A. R\"ollin (2010). Random subgraph counts and U-statistics: multivariate normal approximation via exchangeable pairs and embedding. J. Appl. Probab. {\bf 47}, 378--393.

\bibitem{Tal}
M. Talagrand (2003). {\em Spin Glasses: A Challenge for Mathematicians}. Springer-Verlag, Berlin.

\bibitem{WermanWright}
M. Werman and M.L. Wright (2016). Intrinsic volumes of random cubical complexes. Discrete Comput. Geom. {\bf 56}, 93--113.

\bibitem{Zheng}
G. Zheng (2016+). Normal approximation and almost sure central limit theorem for non-symmetric Rademacher functionals. to appear in Stochastic Process. Appl.

\end{thebibliography}
\end{document}